\documentclass[11pt,leqno]{amsart}
\usepackage{amsmath, amsthm, amssymb, xspace, mathrsfs}
\usepackage[breaklinks=true]{hyperref}%To get clickable links to papers
\usepackage{amsmath,amscd}
\newtheorem{assumption}{Assumption}

\theoremstyle{plain}
\newtheorem{theorem}[equation]{Theorem}
\newtheorem{lemma}[equation]{Lemma}
\newtheorem{prop}[equation]{Proposition}
\newtheorem{cor}[equation]{Corollary}

\theoremstyle{definition}
\newtheorem{defin}[equation]{Definition}

\numberwithin{equation}{section}

\newcommand{\comment}[1]{}

\begin{document}

\title[Commutants of multiplication operators]{Multiplication operators on the Bergman spaces of Pseudoconvex domains  }

\author{Akaki Tikaradze}
\address{Department of Mathematics, University of Toledo, Toledo, Ohio 43606}
\email{\tt tikar06@gmail.com}
\begin{abstract}

Let $\Omega\subset \mathbb{C}^n$ be a bounded smooth pseudoconvex domain, 
and let $f=(f_1,\cdots, f_n):\overline{\Omega}\subset\mathbb{C}^n$ be
an $n$-tuple of holomorphic functions on $\overline{\Omega}.$ In this paper we study  
commutants of the corresponding multiplication operators $\lbrace T_{f_1}, \cdots, T_{f_n}\rbrace=T_f $ on the 
Bergman space $A^2(\Omega).$
One of our main results is a geometric description of the algebra of commutants of $\lbrace T_f, {T_f}^{*}\rbrace, $
generalizing a result by Douglas, Sun and Zheng \cite{DSZ}.

\end{abstract}
\maketitle

\section{Introduction}

Let $\Omega\subset \mathbb{C}^n$ be a bounded smooth pseudoconvex  domain. The Bergman
space of all square integrable holomorphic functions on $\Omega$
will be denoted by $A^2(\Omega),$ while the subspace
of all bounded holomorphic functions on $\Omega$ will be 
denoted by $H^{\infty}(\Omega).$
 Given a function 
$f\in L^{\infty}(\Omega),$ one defines the corresponding Toeplitz
operator with the symbol $f: T_f:A^2(\Omega)\to A^2(\Omega),$ 
as the composition of the multiplication operator by $f$  followed
by the orthogonal projection
from $L^2(\Omega)$ to $A^2(\Omega).$
If $f$ is holomorphic, then $T_f=M_f$ is the multiplication operator by $f.$
Questions related to commutants of Toeplitz
operators have been of great interest for some time.

The following is the motivating problem for this paper: Let
\\$f=(f_1,\cdots, f_n):\overline{\Omega}\to \mathbb{C}^n$ be a holomorphic
mapping in a neighbourhood of $\overline{\Omega}$ with a 
nontrivial Jacobian determinant. Describe the algebra of commutants of
 $\lbrace T_{f_i}, 1\leq i\leq n\rbrace =T_f.$

It is of a special interests to describe the largest $C^{*}$-subalgebra
of the above algebra, the algebra of  commutants of $\lbrace T_f, T_f^{*}\rbrace$
(here and everywhere $T_f^{*}$ denotes $\lbrace T_{f_i}^{*},1\leq i\leq n\rbrace).$
Indeed, reducing subspaces of $T_f$ correspond to projections 
in  this algebra.

Both of the above questions have been extensively studied for the past several decades when $n=1$ and $\Omega=D$
is the unit disc. Indeed, by a result of Thompson \cite{Th}, it suffices to study
the commutants of $T_f$ when $f$ is a finite Blaschke product. In this case it
can be described it terms of the Riemann surface $f^{-1}\circ f(D')$, where $D'$ is $D$ with
preimages of the critical values of $f$ removed [\cite{Co}, Theorem 3] 
(although Cowen and Thompson worked in the Hardy space setting, 
their results easily
carry over to the Bergman space). 

In a recent important work by Douglas, Sun and Zheng \cite{DSZ},  
the algebra of commutants of $\lbrace T_f,T_f^{*}\rbrace$ is explicitly described.
 In particular, they show that
its dimension equals to the number of connected components of $f^{-1}\circ f(D')$ (\cite{DSZ}, Theorem 7.6).
Also noteworthy are results of Guo and Huang, who under the assumption that $f:D\to f(D)$ is a covering map, 
described among other things the commutant
of $\lbrace T_f, T_f^{*}\rbrace$ in terms of fundamental group of $f(D)$ [\cite{GH2}, Theorem 1.3].

Motivated by these results, we extend them to high dimensional domains. Namely,
we introduce a certain $n$-dimensional complex manifold $W_f$(Definition \ref{sheep})
$$W_f\subset (\Omega\setminus Z)\times_f(\Omega\setminus Z)=\lbrace (z, w), f(z)=f(w), z, w\in \Omega\setminus Z\rbrace$$
defined as the largest open subset of $(\Omega\setminus Z)\times_f(\Omega\setminus Z)$ such that
the projection $p:W_f\to \Omega\setminus Z$ is a covering map, where $Z$ is the 
preimage of all critical values of $f$
on $\overline{\Omega}.$ Under some mild assumptions on $\Omega, f $ (Assumptions \ref{sheep}, \ref{dense})
we prove that the algebra of  commutants of $\lbrace T_f, {T_f}^{*}\rbrace$ is
isomorphic to the algebra of locally constant functions on $W_f$ under convolution product (Theorem \ref{reducing}).
This is a
generalization of the above mentioned theorem by Douglas, Sun and Zheng \cite{DSZ}. Our proof
closely follows their ideas.

We also investigate the commutants of $T_f$ in the Toeplitz algebra of
$\Omega$, the norm closed subalgebra of $B(A^2(\Omega))$ generated by all Toepltiz
operators $T_h, h\in L^{\infty}(\Omega)$.
Motivated by a result of Axler-Cuckovic-Rao \cite{ACR} on commutants of analytic
Toeplitz operators in one variable, we prove that the commutant of $T_f$ in the Toeplitz
algebra of $\Omega$ consists of multiplication operators by bounded holomorphic functions
on $\Omega,$ Theorem \ref{strong}.

\section{Nullstellensatz for the Bergman space}
Throughout this paper for a holomorphic mapping $g:\Omega\to \mathbb{C}^n, \Omega\subset \mathbb{C}^n$  
by $J_g$ we will denote the determinant of the Jacobian of $g.$

In this section we will recall a (weak) version of Nullstellensatz for the Bergman space of a bounded
pseudoconvex domain in $\mathbb{C}^n$ (Lemma \ref{Nullst}). This result will be crucial for studying commutants 
of $T_f.$
All the results in this section
follow well-known approach of using Koszul  and $\bar{\partial}$-complex
for proving Nullstellensatz type statements on pseudoconvex domains and are essentially  well-known
(see for example \cite{PS}).
We include proofs for a reader's convenience.

As always, let $\Omega\subset \mathbb{C}^n$ be a bounded pseudoconvex domain. We will denote
by $A^{\infty}(\Omega)$ the set of all holomorphic functions on $\Omega$ which
are $C^{\infty}$-smooth on $\overline{\Omega}.$ Let $f=(f_1,\cdots, f_m):\Omega\to \mathbb{C}^m$
be an $m$-tuple of holomorphic functions from $A^{\infty}(\Omega),$ which will also be viewed as a holomorphic
mapping to $\mathbb{C}^m.$
Let us recall the definition of the Koszul double complex of $f$ on $\Omega.$
Define  the $\bar{\partial}$-Koszul double complex $(K, b_f, \bar{\partial})$ on $\Omega$ as follows
$$K=\bigoplus K_{i, j},  K_{i, j}=\Lambda^i(V)\otimes_{\mathbb{C}} C^{\infty}_{0, j}(\overline{\Omega})$$ 
where $V=\oplus_{i=1}^m \mathbb{C}v_i,$ and $C^{\infty}_{0, j}$($\overline{\Omega})$
denotes the space
of all $C^{\infty}$-smooth $(0, j)$-forms on $\overline{\Omega}$
There is a natural product on $K$
defined as follows $$(u\otimes \omega_1)\cdot (v\otimes \omega_2)=(u\wedge v)\otimes (\omega_1\wedge \omega_2).$$
 Differentials of this bicomplex
are $\bar{\partial}:K_{i, j}\to K_{i, j+1}$ and the Koszul differential 
$b_f:K_{i, j}\to K_{i-1, j}$ defined as follows
$$ b_f(\sum_i v_i\otimes \omega_i)=\sum_if_i\omega, $$
$$b_f(x\cdot y)=b_f(x)\cdot y+(-1)^ix\cdot b_f(y), x\in K_{i, j}, \bar{\partial}(u\otimes\omega)=u\otimes \bar{\partial}(\omega)$$
Clearly $\bar{\partial}b_f=b_f\bar{\partial}.$

\begin{lemma}\label{koszul}

Let $\Omega\subset \mathbb{C}^n$ be a bounded pseudoconvex domain. 
Let $f=(f_1,\cdots,f_m)\in A^{\infty}(\Omega)$ be an $m$-tuple
of holomorphic functions and  $(K, b_f, \bar{\partial})$ be the Koszul
double complex of $f$ as above. Let
$U\subset\subset \Omega$ be an open subset such that $f^{-1}(0)\cap \bar{U}=\emptyset$. 
 Let $w\in K_{i,j}$  be such that 
$b_f(w)=\bar{\partial}(w)=0,$ $supp(w)\subset U$ then there exists $w'\in K_{i+1, j}$ 
such that $w=b_fw', \bar{\partial}(w')=0.$

\end{lemma}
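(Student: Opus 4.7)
The plan is a standard double-complex zigzag that couples a local Koszul homotopy on $\bar U$ (available because $f$ is nowhere zero there) with $\bar\partial$-solvability on the pseudoconvex domain $\Omega$. First I would construct the Koszul homotopy: on a neighborhood $V$ of $\bar U$ with $V\subset\subset\Omega$ on which $\sum_j|f_j|^2$ is bounded below, the functions $h_k := \overline{f_k}/\sum_j|f_j|^2$ are smooth and satisfy $\sum_k f_k h_k \equiv 1$, so the operator $\sigma(\eta) := \sum_k h_k\, v_k\wedge\eta$ realizes a homotopy $b_f\sigma + \sigma b_f = \mathrm{id}$ on elements supported in $\bar U$ (extended by zero to all of $\Omega$).

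Next I would build an upward ladder. Set $w_0 := \sigma(w) \in K_{i+1,j}$ and inductively $w_{l+1} := \sigma(\bar\partial w_l) \in K_{i+2+l,\, j+l+1}$. Using $b_f\bar\partial = \bar\partial b_f$ and the homotopy identity, an easy induction shows that each $\bar\partial w_l$ is $b_f$-closed and that
\[
b_f w_0 = w, \qquad b_f w_{l+1} = \bar\partial w_l .
\]
The iteration terminates at level $L := \min(m-i-1,\, n-j)$; at that level the target bidegree of $\bar\partial w_L$ is either $K_{m+1,*}=0$ or $K_{*,n+1}=0$, which forces $\bar\partial w_L = 0$ (in the Koszul-top case this follows by applying the homotopy identity to $\bar\partial w_L$).

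Finally I would descend the ladder using Dolbeault. Since $\Omega$ is a bounded smooth pseudoconvex domain, Kohn's theorem gives that every $\bar\partial$-closed form in $C^\infty_{0,q}(\bar\Omega)$ with $q\geq 1$ is $\bar\partial$ of a form smooth on $\bar\Omega$. Put $w'_L := w_L$; inductively, solve $\bar\partial\gamma_{l+1} = w'_{l+1}$ for some $\gamma_{l+1}\in K_{i+2+l,\, j+l}$ and set $w'_l := w_l - b_f\gamma_{l+1}$. A direct computation using $b_f w_{l+1} = \bar\partial w_l$ together with the induction hypothesis gives
\[
\bar\partial w'_l = \bar\partial w_l - b_f\bar\partial\gamma_{l+1} = \bar\partial w_l - b_f w'_{l+1} = \bar\partial w_l - b_f w_{l+1} = 0,
\]
while $b_f w'_l = b_f w_l$ preserves the ladder relation, so in particular $b_f w'_0 = w$. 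Then $w' := w'_0 \in K_{i+1,j}$ is the required element.

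The hard part will be the bookkeeping at the top of the ladder, where one has to check that the upward iteration ends with a $\bar\partial$-closed element regardless of which side of the bidegree square is exhausted first; the one essential analytic ingredient beyond this formal zigzag is Kohn's $C^\infty(\bar\Omega)$-regularity for $\bar\partial$, which is what keeps every intermediate form in the $C^\infty$ category throughout the argument.
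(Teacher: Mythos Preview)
Your argument is correct and is essentially the same as the paper's proof: both combine the Koszul contracting homotopy available on $\bar U$ (where $f$ has no zeros) with Kohn's $C^\infty(\bar\Omega)$-regularity for $\bar\partial$. The paper packages this as a descending induction on $i$---given $w$, produce $y=\sigma(w)$ with $b_f y=w$, apply the inductive hypothesis to $\bar\partial y\in K_{i+1,j+1}$ to obtain a $\bar\partial$-closed $z$ with $b_f z=\bar\partial y$, solve $\bar\partial z_1=z$ via Kohn, and set $w'=y-b_f z_1$---whereas you unroll that recursion into the explicit ascending/descending ladder, which amounts to the same zigzag with the termination step (your case analysis at level $L$) playing the role of the base case of the induction.
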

\begin{proof}

Let $w\in K_{i,j}.$ We will proceed by the descending induction on $i.$
 There exists $y\in K_{i+1, j}$ such that $b_fy=w, supp(y)\subset U.$ 
Indeed, let $g_i\in C^{\infty}(\overline{\Omega})$ be such that $(\sum_i f_ig_i)_U=1.$ 
Therefore  $b_f((\sum v_i\otimes g_i)\cdot w)=w.$
 Then $\bar{\partial}(y)\in K_{i+1, j+1}$ satisfies the
inductive assumption, so there exists $z$ such that $b_f(z)=\bar{\partial}(y)$ and $\bar{\partial}(z)=0.$
Let $z_1$ be such that $\bar{\partial}(z_1)=z$ (it exists by Kohn's theorem). Replacing $y$ by $y-b_f(z_1)$ we are done.

\end{proof}

\begin{cor}

Let $f_1, \cdots, f_n\in A^{\infty}(\Omega)$ and let $U\subset\subset \Omega$ be a 
an open subset of $\Omega$ such that $f^{-1}(0)\subset U.$ If $g\in A^{\infty}(\Omega)$
such that $g\in \sum_i f_iA(U),$ then $g\in \sum_if_iA^{\infty}(\Omega).$

\end{cor}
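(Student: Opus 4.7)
My plan is to first produce a solution $g = \sum_i f_i \phi_i$ with $\phi_i \in C^\infty(\overline{\Omega})$, and then use Lemma~\ref{koszul} together with Kohn's theorem to correct the $\phi_i$ into functions of $A^\infty(\Omega)$.

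For the smooth step, I would pick a cutoff $\chi \in C^{\infty}_c(U)$ with $\chi \equiv 1$ on an open neighborhood $V$ of $f^{-1}(0)$, $\overline{V} \subset U$. On $\overline{\Omega} \setminus V$ one has $\sum_j |f_j|^2 > 0$, so one may define
$$\phi_i := \chi h_i + (1 - \chi)\,\frac{\bar f_i\, g}{\sum_j |f_j|^2}, $$
with the second summand extended by $0$ on $V$; since $1 - \chi$ vanishes on $V$ while the remaining factor is smooth on $\overline{\Omega} \setminus f^{-1}(0)$, one has $\phi_i \in C^\infty(\overline{\Omega})$, and $\sum_i f_i \phi_i = \chi g + (1-\chi) g = g$ directly.

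For the Koszul correction, I would set $\mu := \sum_i v_i \otimes \phi_i \in K_{1,0}$, so $b_f \mu = g$, and let $w := \bar\partial \mu \in K_{1,1}$. Then $b_f w = \bar\partial(b_f \mu) = \bar\partial g = 0$, $\bar\partial w = 0$ is automatic, and $\mathrm{supp}(w) \subset \overline{\Omega} \setminus V$ is disjoint from $f^{-1}(0)$. Applying Lemma~\ref{koszul} to $w$ yields $\xi \in K_{2,1}$ with $b_f \xi = w$ and $\bar\partial \xi = 0$; Kohn's theorem then produces $\tau \in K_{2,0}$ with $\bar\partial \tau = \xi$. Setting $\zeta := b_f \tau \in K_{1,0}$ gives $b_f \zeta = 0$ and $\bar\partial \zeta = b_f(\bar\partial \tau) = b_f \xi = \bar\partial \mu$, so $\mu' := \mu - \zeta$ satisfies $b_f \mu' = g$ and $\bar\partial \mu' = 0$. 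The coefficients of $\mu'$ are the desired $h'_i \in A^\infty(\Omega)$.

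The main obstacle I foresee is the application of Lemma~\ref{koszul} when $\mathrm{supp}(w)$ may touch $\partial \Omega$, and so does not sit in a compactly contained subset of $\Omega$. However, the proof of that lemma only uses the existence of smooth $g_i \in C^\infty(\overline{\Omega})$ with $\sum f_i g_i = 1$ on a neighborhood of the relevant support, plus Kohn's $\bar\partial$-solvability on $\overline{\Omega}$; both are available provided the closure of $\mathrm{supp}(w)$ in $\overline{\Omega}$ is disjoint from $f^{-1}(0)$, which is exactly our situation. Thus only a cosmetic strengthening of the lemma's hypothesis is required.
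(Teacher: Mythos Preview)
Your proposal is correct and follows essentially the same approach as the paper: construct a smooth solution $\sum_i f_i \phi_i = g$ with $\phi_i \in C^\infty(\overline{\Omega})$, then apply the Koszul/$\bar\partial$ machinery of Lemma~\ref{koszul} together with Kohn's theorem to correct it to a holomorphic solution. Your construction of the $\phi_i$ via the cutoff $\chi$ and the quotients $\bar f_i g/\sum_j |f_j|^2$ makes explicit what the paper leaves implicit (it simply asserts the existence of $h_i \in C^\infty(\overline{\Omega}) \cap A(U)$ with $g = \sum f_i h_i$), and your observation about the support of $w$ possibly meeting $\partial\Omega$ is well taken---the paper's own proof tacitly uses Lemma~\ref{koszul} in exactly the same extended form, since there too $\mathrm{supp}(\bar\partial h_i) \subset \overline{\Omega}\setminus U$ is not compactly contained in $\Omega$.
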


\begin{proof}

Let $h_i\in C^{\infty}(\bar{\Omega})\cap A(U)$ such that $g=\sum_if_ih_i.$ Then
$bx=\bar{\partial}(x)=0$ where $x=\sum v_i\otimes \bar{\partial}(h_i).$ Thus
by the above there exists $z\in K_{2, 0}$ such that $x=b(\bar{\partial}(z)).$
Then $\bar{\partial}(\sum v_i\otimes h_i-b(z))=0$ and $b((\sum v_i\otimes h_i-b_f(z))=g.$
Write $\sum f_i\otimes h_i-b(z)=\sum v_i\otimes h_i.$ Then 
$h_i\in A^{\infty}(\overline{\Omega})$ and $g=\sum_i f_ih_i.$
\end{proof}

For a subset $B\subset \overline{\Omega}$, we will denote by $I(B)$ the ideal of holomorphic functions
on $\Omega$ which vanish on $B.$

The proof below directly follows the proofs of similar statements by Overlid \cite{Ov},
Hakim-Sibony \cite{HS}.

\begin{cor}\label{zero} Let $f=f_1,\cdots, f_m\in A^{\infty}(\Omega)$ be such that
$f^{-1}(0)$ is a finite set. If the Jacobian of $f$
has the full rank on each point of $f^{-1}(0),$ then 
$I(f^{-1}(0))\cap A^{\infty}(\overline{\Omega})=\sum_i f_iA^{\infty}(\overline{\Omega}).$

\end{cor}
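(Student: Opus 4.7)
The plan is to reduce to the preceding corollary via a local Nullstellensatz argument. The inclusion $\sum_i f_i A^{\infty}(\overline{\Omega}) \subseteq I(f^{-1}(0)) \cap A^{\infty}(\overline{\Omega})$ is immediate, so I only need to show the reverse: given $g \in A^{\infty}(\overline{\Omega})$ vanishing on $f^{-1}(0)$, produce $h_i \in A^{\infty}(\overline{\Omega})$ with $g = \sum_i f_i h_i$.

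First I would observe that the full-rank hypothesis forces $m \geq n$, for otherwise each zero of $f$ would be positive-dimensional, contradicting finiteness of $f^{-1}(0)$. Next, fix $p \in f^{-1}(0)$. After reindexing, $f_1, \ldots, f_n$ have $\mathbb{C}$-linearly independent differentials at $p$ and so form a holomorphic coordinate system on a small neighborhood $V_p$. They therefore generate the maximal ideal $\mathfrak{m}_p$ of $\mathcal{O}_{\Omega,p}$, and since $g(p) = 0$ there exist $h_i^{(p)} \in \mathcal{O}(V_p)$ (taking $h_i^{(p)} = 0$ for $i > n$) satisfying $g = \sum_i f_i h_i^{(p)}$ on $V_p$. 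Shrinking the $V_p$ to be pairwise disjoint with $\overline{V_p} \subset \Omega$ and gluing, I obtain an open set $U_0 \supset f^{-1}(0)$, $U_0 \subset\subset \Omega$, together with $h_i \in A(U_0)$ such that $g = \sum_i f_i h_i$ on $U_0$.

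To apply the preceding corollary I need a global representation $g = \sum_i f_i \tilde h_i$ where the $\tilde h_i$ lie in $C^{\infty}(\overline{\Omega})$ and are holomorphic on some neighborhood of $f^{-1}(0)$. Since $|f|^2 = \sum_i |f_i|^2$ is strictly positive on $\overline{\Omega} \setminus f^{-1}(0)$, the smooth functions $\overline{f_i}\, g / |f|^2$ give a (non-holomorphic) division away from the zero set. Choosing a cutoff $\chi \in C^{\infty}_c(U_0)$ with $\chi \equiv 1$ on a smaller neighborhood $U \subset U_0$ of $f^{-1}(0)$, I set
\[
\tilde h_i \;=\; \chi\, h_i \,+\, (1-\chi)\,\frac{\overline{f_i}\,g}{|f|^2}.
\]
Then $\tilde h_i \in C^{\infty}(\overline{\Omega}) \cap A(U)$ and $\sum_i f_i \tilde h_i = g$ on $\Omega$. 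In particular $g \in \sum_i f_i A(U)$, and the preceding corollary produces the desired $h'_i \in A^{\infty}(\overline{\Omega})$ with $g = \sum_i f_i h'_i$.

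The principal obstacle is precisely this smooth-extension step: one must pass from purely local holomorphic division near the isolated zeros to a representation whose coefficients are smooth up to the boundary, which is exactly the input required by the $\bar{\partial}$-Koszul machinery of Lemma \ref{koszul}. Once this bridging step is in place, the remainder of the argument is automatic from the preceding corollary.
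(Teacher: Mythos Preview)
Your proof is correct and follows the same route as the paper: local Nullstellensatz near the finitely many zeros, then invoke the preceding corollary. The only difference is that you carry out explicitly the smooth--extension step (patching the local holomorphic $h_i$ with $\overline{f_i}\,g/|f|^2$ via a cutoff), whereas the paper absorbs that step into the \emph{proof} of the preceding corollary rather than its hypothesis: the statement of that corollary only asks for $g\in\sum_i f_i A(U)$, which you already have once the local division is done. So your extra construction is not needed to invoke the corollary, but it is exactly the first line of its proof, so no harm is done --- your argument is simply a bit more self-contained.
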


\begin{proof}

Let $h\in I(F^{-1}(0))\cap A^{\infty}(\overline{\Omega}).$
It follows from the local Nullstellensatz that there exists an open neighbourhood of $f^{-1}(0),$
$f^{-1}(0)\subset U\subset \Omega$ and $g_i\in A(U),$ such that $h|_{U}=\sum_if_i|_{U}g_i.$
By the above corollary we are done.

\end{proof}

We will need the following assumption on $\Omega.$
It was first introduced in \cite{AS}, see also \cite{PS}.
\begin{assumption}\label{sheep}

$\Omega\subset \mathbb{C}^n$ is a connected smooth bounded pseudoconvex domain, such that
for any $z\in \partial\Omega, A^{\infty}(\Omega)\cap I(z)$ is dense in $A^2(\Omega).$

\end{assumption}

%In fact, we do not know a single example of a bounded smooth pseudoconvex
%domain not satisfying Assumption\ref{sheep}.

Recall the following simple

\begin{lemma}
Assumption \ref{sheep} is satisfied for bounded smooth strongly pseudoconvex domains or star-shaped
smooth pseudoconvex domains.
\end{lemma}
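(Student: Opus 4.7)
The plan is to reduce both cases to two common ingredients: (a) that $A^{\infty}(\Omega)$ is dense in $A^{2}(\Omega)$, and (b) that for every $z \in \partial\Omega$ there is a sequence $\phi_n \in A^{\infty}(\Omega)$ with $\phi_n(z)=1$ and $\|\phi_n\|_{A^2} \to 0$. Granted (a) and (b), any $f \in A^2(\Omega)$ is approximated first by some $\tilde f \in A^{\infty}(\Omega)$ via (a), then by $\tilde f - \tilde f(z)\phi_n \in A^{\infty}(\Omega) \cap I(z)$ via (b), which verifies Assumption~\ref{sheep}.

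For strongly pseudoconvex $\Omega$, I would obtain (a) from Kohn's regularity of the $\bar{\partial}$-Neumann problem: the Bergman projection $P$ preserves $C^{\infty}(\overline{\Omega})$, so that $P(C^{\infty}_{c}(\Omega)) \subset A^{\infty}(\Omega)$ is already dense in $A^2(\Omega)$. For (b), I would invoke the classical existence at every $z \in \partial\Omega$ of a holomorphic peak function $p_z \in A^{\infty}(\Omega)$ with $p_z(z)=1$ and $|p_z(w)|<1$ for all $w \in \overline{\Omega}\setminus\{z\}$; dominated convergence then forces $\|p_z^n\|_{A^2}\to 0$, and $\phi_n := p_z^n$ does the job.

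For the star-shaped case, say with respect to $0$, smoothness together with star-shapedness gives $r\overline{\Omega}\subset\subset \Omega$ for every $r<1$. Ingredient (a) then follows from the dilation trick: $f_r(w):=f(rw)$ belongs to $A^{\infty}(\Omega)$, and a change of variables shows $\|f_r\|_{A^2}\to\|f\|_{A^2}$, which combined with interior pointwise convergence yields strong convergence $f_r\to f$ in $A^2(\Omega)$. For (b), the key input is the classical boundary blow-up $K_{\Omega}(w,w)\to\infty$ as $w\to\partial\Omega$ for bounded pseudoconvex domains. Setting $w_t := tz \in \Omega$ for $t<1$, I would dilate the reproducing kernel by defining $\phi_t(w):=K_{\Omega}(tw,w_t)$; since $\{tw : w\in\overline{\Omega}\}\subset\subset \Omega$, $\phi_t$ extends holomorphically past $\partial\Omega$ and so lies in $A^{\infty}(\Omega)$. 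A change of variables gives $\|\phi_t\|_{A^2(\Omega)}^2 \le t^{-2n}K_{\Omega}(w_t,w_t)$, while $\phi_t(z)=K_{\Omega}(w_t,w_t)$, so the normalized sequence $\phi_t/\phi_t(z)$ has value $1$ at $z$ and $A^2$-norm bounded by $t^{-n}/\sqrt{K_{\Omega}(w_t,w_t)}\to 0$, realizing (b).

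The main obstacle will be the star-shaped case: no direct holomorphic peak function is available, so the peak-type sequence has to be assembled by combining dilation (to secure smoothness up to $\partial\Omega$) with the diagonal blow-up of the Bergman kernel (to secure $A^{2}$-smallness after normalization). In the strongly pseudoconvex case, a single classical peak function handles (b) in one shot.
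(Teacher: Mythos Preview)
Your proof is correct and follows essentially the same route as the paper: both reduce Assumption~\ref{sheep} to producing, for each boundary point $z$, a sequence $\phi_n\in A^{\infty}(\Omega)$ with $\phi_n(z)=1$ and $\|\phi_n\|_{A^2}\to 0$, and both dispatch the strongly pseudoconvex case via powers of a holomorphic peak function. The only variation is in the star-shaped case: the paper dilates a fixed $f\in A^2(\Omega)$ that blows up at the given boundary point (existence taken from Catlin), while you dilate the Bergman kernel $K_\Omega(\cdot,w_t)$ and use the diagonal blow-up $K_\Omega(w_t,w_t)\to\infty$; the mechanism---dilation to secure $A^{\infty}$-membership combined with a boundary blow-up to force $L^2$-smallness after normalization---is the same.
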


\begin{proof}

Notice that to verify Assumption \ref{sheep}, 
it suffices to check the following: for a given $z\in\partial\Omega,$
there exists a sequence $f_n\in A^{\infty}(\Omega)$ such that $f_n(z)=1$ and 
$\lim_{n\to \infty} \|f_n\|_{A^2(\Omega)}=0.$ 
Indeed, let $g\in A^{\infty}(\Omega).$
Then $g- g(z)f_n\in I(z)$ and $\lim_{n\to \infty} (g-g(z)f_n)=g$ in $A^2(\Omega).$
Thus, $A^{\infty}(\Omega)\cap I(z)$ is dense in $A^{\infty}(\Omega),$ and since $A^{\infty}(\Omega)$ is
dense in $A^2(\Omega) $ (Catlin \cite{Ca}), we are done.

Suppose that $\Omega$ is a smooth strongly pseudoconvex domain.
Let $z\in \partial\Omega.$
 It is well-known that $z$ is a peak point. Let $f\in A^{\infty}(\Omega)$ be
such that $f(z)=1, |f(w)|<1, w\in \overline{\Omega}\setminus {z}$. Then $\lim_{m\to\infty} \|f^m\|_2=0.$

 Now let $\Omega$ be a star shaped smooth domain. Without loss of generality, we may assume
 that $r\Omega\subset \Omega, 0\leq r\leq 1.$ Let $\theta\in \partial\Omega.$
 Let $f\in A^2(\Omega)$ be such that $\lim_{w\to \theta}(f(w))=\infty.$ 
Existence of such $f$ follows for example from [\cite{Ca2}, Lemma1, page 153]. 
 Then $f_r(z)=f(rz)\in A^{\infty}(\Omega)$
 and $\|f_r\|_2\leq r^{-2n}\|f\|_2,$ while $\lim_{r\to 1} f_r(\theta)=\infty.$ 
\end{proof}

We have another easy
\begin{lemma}

If $\Omega$ satisfies Assumption \ref{sheep}, then for any finite set $B\subset \partial\Omega,$
$A^{\infty}(\Omega)\cap I(B)$ is dense in $A^2(\Omega).$

\end{lemma}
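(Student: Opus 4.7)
The plan is to reduce the finite-set case to the single-point case by combining Assumption \ref{sheep} with a Lagrange-type interpolation trick. Write $B = \{z_1, \ldots, z_k\} \subset \partial\Omega$. As noted at the start of the preceding lemma's proof, since $A^{\infty}(\Omega)$ is dense in $A^2(\Omega)$ (Catlin), it suffices to show that every $g \in A^{\infty}(\Omega)$ can be $A^2$-approximated by elements of $A^{\infty}(\Omega) \cap I(B)$.

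The key building block I would construct is, for each $i$, a sequence $\phi_n^{(i)} \in A^{\infty}(\Omega)$ with $\phi_n^{(i)}(z_i) = 1$, $\phi_n^{(i)}(z_j) = 0$ for $j \neq i$, and $\|\phi_n^{(i)}\|_{A^2(\Omega)} \to 0$. To do this, invoke the quantitative form of Assumption \ref{sheep} established in the preceding lemma: there is a sequence $f_n^{(i)} \in A^{\infty}(\Omega)$ with $f_n^{(i)}(z_i) = 1$ and $\|f_n^{(i)}\|_{A^2(\Omega)} \to 0$. Since the points $z_1, \ldots, z_k$ are distinct in $\mathbb{C}^n$, pick affine-linear functions $\ell_{ij}$ with $\ell_{ij}(z_j) = 0$ and $\ell_{ij}(z_i) = 1$, and form the polynomial $p_i = \prod_{j \neq i} \ell_{ij}$, so that $p_i(z_i) = 1$ and $p_i(z_j) = 0$ for $j \neq i$. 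Setting $\phi_n^{(i)} = p_i f_n^{(i)}$ gives an element of $A^{\infty}(\Omega)$ with the required boundary values, and since $p_i$ is bounded on $\overline{\Omega}$,
$$\|\phi_n^{(i)}\|_{A^2(\Omega)} \leq \|p_i\|_{L^{\infty}(\Omega)} \, \|f_n^{(i)}\|_{A^2(\Omega)} \longrightarrow 0.$$

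Given $g \in A^{\infty}(\Omega)$, I would then define the approximating sequence
$$h_n = g - \sum_{i=1}^{k} g(z_i)\, \phi_n^{(i)}.$$
By construction $h_n \in A^{\infty}(\Omega)$, and $h_n(z_j) = g(z_j) - g(z_j) = 0$ for each $j$, so $h_n \in A^{\infty}(\Omega) \cap I(B)$. Finally, $\|h_n - g\|_{A^2(\Omega)} \leq \sum_{i=1}^{k} |g(z_i)| \, \|\phi_n^{(i)}\|_{A^2(\Omega)} \to 0$, which finishes the argument.

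There is no real obstacle here; the only mildly delicate point is using the correct quantitative consequence of Assumption \ref{sheep}, namely the existence of $A^2$-small $A^{\infty}$-functions normalized to $1$ at a prescribed boundary point, but this is exactly what appears in the opening of the preceding lemma's proof and is available for free.
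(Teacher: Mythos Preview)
Your proof is correct and is essentially the same as the paper's: both multiply a Lagrange-type interpolant (the paper's $\phi_i\in A^{\infty}(\Omega)$ with $\phi_i(z_j)=\delta_{ij}$, your polynomial $p_i$) by an $A^2$-small function normalized to $1$ at $z_i$, then subtract the resulting correction $\sum_i g(z_i)\phi_i g_i$ from $g$ to land in $I(B)$. The only difference is cosmetic (sequence versus $\epsilon$ formulation).
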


\begin{proof}

Put $B=\lbrace z_i\rbrace_{1\leq i\leq m}.$
Let $\epsilon>0.$ 
Let $g\in A^{\infty}(\Omega).$ Let $\phi_i\in A^{\infty}(\Omega)$ be such that $\phi_i(z_j)=\delta_{ij}.$
Let $g_i\in A^{\infty}(\Omega)$
such that $g_i(z_i)=1, \|g_i\|<\epsilon$ (such $g_i$ exists by Assumption \ref{sheep}).
Then $g-\sum_i g(z_i)\phi_ig_i\in I(B)$ and 
$$\|\sum_i g(z_i)\phi_ig_i\|_2<\|g\|_{L^\infty(\Omega)}\sum_i\|\phi_i\|_{A^2(\Omega)}\epsilon.$$ 
Thus, $A^{\infty}(\Omega)\cap I(B)$ is dense in $A^{\infty}(\Omega),$ and since $A^{\infty}(\Omega)$ is
dense in $A^2(\Omega) ,$ we are done.

\end{proof}

 For $w\in \Omega,$ we will denote by $K_w\in A^2(\Omega)$ the reproducing
kernel of the Bergman space $A^2(\Omega).$ Thus $\langle g, K_w\rangle=g(w)$ for any $g\in A^2(\Omega).$ Also, denote by $k_w$ the normalized Bergman kernel $\frac{K_w}{\|K_w\|}.$

The following is the key result of this section.

\begin{lemma}\label{Nullst}

Suppose that domain $\Omega\subset \mathbb{C}^n$ satisfies Assumption \ref{sheep}.  
 Let $f=(f_1,\cdots, f_n):\overline{\Omega}\to \mathbb{C}^n$ 
be a an open holomorphic mapping.
If  $J_f$ is nonzero on $f^{-1}(0)\cap \overline{\Omega}$, then
$$(\sum f_iA^2(\Omega))^{\perp}=\sum_{w\in f^{-1}(0)}\mathbb{C}K_w.$$

\end{lemma}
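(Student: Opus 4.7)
The plan is to reduce the lemma to Corollary \ref{zero} (the smooth Nullstellensatz) together with Assumption \ref{sheep} (density of $A^\infty(\Omega) \cap I(B)$ in $A^2(\Omega)$ for finite $B \subset \partial\Omega$). First I would check that $f^{-1}(0) \cap \overline{\Omega}$ is a finite set: because $f$ is open with $J_f \neq 0$ at each zero, every zero is isolated, and $\overline{\Omega}$ is compact. Write these zeros as $w_1,\dots,w_k \in \Omega$ (interior) and $w_{k+1},\dots,w_m \in \partial\Omega$ (boundary). Note also that for $w \in \Omega$ with $f(w)=0$ and any $g \in A^2(\Omega)$, $\langle f_i g, K_w\rangle = f_i(w)g(w) = 0$ (using that each $f_i$ is bounded on $\Omega$), which gives the easy inclusion $\supseteq$ of the claim.

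For the nontrivial inclusion, fix $h \in A^2(\Omega)$ with $h \perp \sum_i f_i A^2(\Omega)$. The key interpolation step is to produce functions $\phi_1,\dots,\phi_k \in A^\infty(\overline{\Omega})$ satisfying $\phi_i(w_j) = \delta_{ij}$ for $1 \le j \le m$; these are easy to construct (e.g.\ by suitable products of linear functions) since the $w_j$ are finitely many distinct points in $\mathbb{C}^n$. Define the candidate
\[
h_0 = \sum_{i=1}^k \overline{\langle \phi_i, h\rangle}\,K_{w_i} \in \sum_{w \in f^{-1}(0) \cap \Omega} \mathbb{C} K_w.
\]

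Now given any $\phi \in A^\infty(\overline{\Omega}) \cap I(\{w_{k+1},\dots,w_m\})$, the difference $\phi - \sum_{i=1}^k \phi(w_i)\phi_i$ lies in $A^\infty(\overline{\Omega}) \cap I(f^{-1}(0)\cap\overline{\Omega})$, hence by Corollary \ref{zero} it lies in $\sum_i f_i A^\infty(\overline{\Omega}) \subset \sum_i f_i A^2(\Omega)$. Pairing with $h$ yields $\langle \phi, h\rangle = \sum_{i=1}^k \phi(w_i)\langle \phi_i, h\rangle$, which is exactly $\langle \phi, h_0\rangle$ by the reproducing property of $K_{w_i}$ and the choice of coefficients. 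Thus $h - h_0$ is orthogonal to $A^\infty(\overline{\Omega}) \cap I(\{w_{k+1},\dots,w_m\})$.

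The final step invokes Assumption \ref{sheep} in its extended form (the lemma for finite boundary sets proved just above): $A^\infty(\overline{\Omega}) \cap I(\{w_{k+1},\dots,w_m\})$ is dense in $A^2(\Omega)$, so $h = h_0$, which is the desired conclusion. The main obstacle is handling the boundary zeros of $f$, where $K_w$ does not belong to $A^2(\Omega)$ and Corollary \ref{zero} alone cannot kill them; it is precisely the density hypothesis of Assumption \ref{sheep} that lets us discard them while pinning down $h$ on the interior zeros.
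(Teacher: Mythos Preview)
Your proof is correct and uses exactly the same ingredients as the paper's argument: Corollary \ref{zero} to identify $I(f^{-1}(0))\cap A^{\infty}(\overline{\Omega})$ with $\sum_i f_i A^{\infty}(\overline{\Omega})$, interpolating functions $\phi_i$ separating the finitely many zeros, and the density lemma (from Assumption \ref{sheep}) for $A^{\infty}(\overline{\Omega})\cap I(B')$ with $B'$ the boundary zeros. The only organizational difference is that the paper first proves the density statement $\overline{\sum_i f_i A^2(\Omega)}=A^2(\Omega)\cap I(B)$ (where $B$ is the set of interior zeros) and then passes to orthogonal complements, whereas you work directly with an element $h$ of the orthogonal complement and pin it down as your $h_0$; these are dual formulations of the same argument.
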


\begin{proof}
Let us put $B=f^{-1}(0)\cap \Omega=\lbrace w_1,\cdots, w_m\rbrace$ and
$B'=f^{-1}(0)\cap \partial\Omega.$ It follows from  Corollary \ref{zero} that
$$\sum_i f_iA^{\infty}(\Omega)=I(f^{-1}(0))\cap A^{\infty}(\Omega).$$
Now we claim that $$(A^2(\Omega)\cap I(B))^{\perp}=\sum_{w\in f^{-1}(0)}\mathbb{C}K_w.$$
Indeed, it is clear that $K_w\perp (A^2(\Omega)\cap I(B))$ for all $w\in B.$ On the
other hand, since $K_{w}, w\in B$ are linearly independent and codimension
of $(A^2(\Omega)\cap I(B))$ in $A^2(\Omega)$ is at most $m=|B|$, we obtain the desired equality.

Thus it suffices to show that $\sum f_iA^2(\Omega)$ is dense in $A^2(\Omega)\cap I(B).$ 
It suffices to check that $I(f^{-1}(0))\cap A^{\infty}(\bar{\Omega})$ is dense in $A^2(\Omega)\cap I(B)$ by
Lemma 2.3.
 Let $f\in A^2(\Omega)\cap I(B),$ and let $f_n\in  A^{\infty}(\bar{\Omega})\cap I(B')$ be
such that $\lim_{n\to \infty} f_n=f$ in $A^2(\Omega).$ Let $g_i, i=1,\cdots, m$ be polynomials such that
$g_i(w_j)=\delta_{ij}, g_i(B')=0.$ Put $\phi_n=f_n-\sum_{i=1}^m f_n(w_i)g_i.$ Then $\phi_n(w_j)=0$
for all $j, n.$ Also, for  any $i, \lim_{n\to \infty} f_n(w_i)=0.$ Therefore, $\lim_{n\to \infty} \phi_n=f$
and $\phi_n\in I(f^{-1}(0))\cap  A^{\infty}(\bar{\Omega}).$ 
So, $I(f^{-1}(0))\cap A^{\infty}(\bar{\Omega})$ is dense in $A^2(\Omega)\cap I(B).$
\end{proof}

\section{Some geometry related to $\Omega, f$}

In the rest of the paper, we will fix once and for all a domain
$\Omega\subset \mathbb{C}^n$ satisfying Assumption \ref{sheep} and a
 holomorphic mapping \\$f=(f_1,\cdots,f_n):\overline{\Omega}\to \mathbb{C}^n$  in a  neighbourhood
of $\overline{\Omega}$ such that determinant of its Jacobian  $J_f$ is not identically 0.

Given a function $f:X\to Y$, we will denote by $X\times_fX$ the set \\$\lbrace (z, w)\in X\times X| f(z)=f(w)\rbrace.$

Let us introduce several notations related with $\Omega, f.$
Put $$Z=f^{-1}(f(V(J_f))), \Omega'=\Omega\setminus Z,$$ where $V(J_f)$ is the zero locus of $J_f$ in 
$\overline{\Omega}.$
We will also put \\$\Omega''=\Omega'\setminus f^{-1}(f(\partial \Omega)).$
Thus, $\Omega''\times_f\Omega''\subset \Omega'\times_f\Omega'$ are $n$-dimensional
complex manifolds. As usual $p_1, p_2:\Omega'\times_f\Omega'\to \Omega'$ denote the projections
on the first, second coordinate respectively. Clearly both $p_1, p_2$ are surjective finite-to-one
locally biholomorphic mappings.

Remark that $f:\Omega''\to f(\Omega'')$ is a proper locally biholomorphic mapping. Therefore
it is a covering. Also, $\Omega'$ is connected while $\Omega''$ might not be.

In this section we define a certain open subset $W_f$ of $\Omega'\times_f\Omega'$ which
will play a key role in the rest of the paper.

In this setting we have the following simple but useful

\begin{lemma}\label{proper}

Let $W$ be an open subset of $\Omega'\times_f\Omega'$ such that ${p_1}|_W:W\to \Omega'$ is a covering.
Then  $p_2|_W:W\to \Omega'$ is also a covering.
 In particular, $\partial(W)\subset \partial(\Omega')\times_f\partial(\Omega'),$
 and $p_1|_{\overline{W}}, p_2|_{\overline{W}}:\overline{W}\to \overline{\Omega}\setminus Z$ are coverings, where
 $\overline{W}$ denotes the closure of 
 $W$ in $(\overline{\Omega}\setminus Z)\times_f(\overline{\Omega}\setminus Z).$

\end{lemma}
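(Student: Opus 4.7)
My plan is to show that $p_2|_W$ is a local biholomorphism with locally constant finite fibers, from which the covering property follows. First, on all of $\Omega'\times_f\Omega'$, both $p_1$ and $p_2$ are local biholomorphisms: the tangent space at $(z_0,w_0)$ is $\{(v_1,v_2):Df_{z_0}(v_1)=Df_{w_0}(v_2)\}$, and since $J_f$ is nonvanishing on $\Omega'$, each coordinate projection from this tangent space is a linear isomorphism. Restricting to the open subset $W$ preserves this, so $p_2|_W$ is already a local biholomorphism.

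Next I would use the covering hypothesis on $p_1|_W$ to produce, for each $(z_0,w_0)\in W$, a local holomorphic section of $p_2|_W$ passing through it. The evenly covered neighborhood of $z_0$ under $p_1|_W$ gives a holomorphic function $\phi$ defined near $z_0$ with $\phi(z_0)=w_0$ and $f\circ\phi=f$; since $f$ is locally biholomorphic, so is $\phi$, and its local inverse produces the desired section $w\mapsto(\phi^{-1}(w),w)$. Combined with finiteness of the fiber $p_2^{-1}(w_0)\cap W\subset f^{-1}(f(w_0))\cap\overline{\Omega}$, this yields at each $w_0\in\Omega'$ a finite collection of disjoint open sheets of $p_2|_W$ mapped homeomorphically onto a common small neighborhood $V\ni w_0$.

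The main obstacle is showing that these sheets exhaust $p_2^{-1}(V)\cap W$---equivalently, ruling out a sequence $(z_n,w_n)\in W$ with $w_n\to w_0\in\Omega'$ but $z_n\to z^*\in\partial\Omega\setminus Z$. I would first pass to the open dense subset $\Omega''=\Omega'\setminus f^{-1}(f(\partial\Omega))$, over which $f|_{\Omega''}$ is a proper covering onto $f(\Omega'')$, and hence both projections on $\Omega''\times_f\Omega''$ are coverings of $\Omega''$. An evenly-covered-neighborhood argument then forces any open subset of $\Omega''\times_f\Omega''$ on which $p_1$ restricts to a covering to be a union of connected components of $\Omega''\times_f\Omega''$; each such component is a covering via both projections, which settles the lemma over $\Omega''$ with some locally constant fiber size $d'$. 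Extending across the codimension-one exceptional set $\Omega'\setminus\Omega''$ would then be handled by combining density of $\Omega''$ with a local biholomorphic extension of sheets of $p_1|_W$ into the ambient fiber product at boundary points: openness of $W$ together with path lifting of $p_1|_W$ along such an extension forces $|p_2^{-1}(w_0)\cap W|=d'$, ruling out the bad case.

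Finally, the ``in particular'' assertions follow symmetrically once $p_2|_W$ is known to be a covering: any $(z^*,w^*)\in\partial W$ with $w^*\in\Omega'$ (or $z^*\in\Omega'$) would be forced into $W$ by the sheet argument for $p_2|_W$ (resp.\ $p_1|_W$), contradicting openness. Hence $\partial W\subset\partial\Omega'\times_f\partial\Omega'$, and $p_1,p_2$ extend to coverings $\overline{W}\to\overline{\Omega}\setminus Z$ by glueing the ambient local biholomorphism sheets of $f$ across this common boundary.
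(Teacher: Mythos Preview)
Your strategy diverges from the paper's, and the gap is exactly where you flag it: the extension from $\Omega''$ to $\Omega'$. The paper never passes through $\Omega''$ at all. Instead it removes a measure-zero closed set $X$ from $\Omega'$ (chosen so that $\Omega'\setminus X$ is simply connected and avoids a prescribed fiber), globally trivializes $p_1|_W$ over $\Omega'\setminus X$ via holomorphic embeddings $\rho_1,\dots,\rho_m:\Omega'\setminus X\to\Omega'$, and then restricts to the $f$-saturated open set $U=\Omega'\setminus f^{-1}(f(X))$. The decisive observation is elementary but is precisely what your outline lacks: each $\rho_i$ satisfies $f\circ\rho_i=f$, is injective, and sends $U$ into $U$; since every $f$-fiber in $U$ is finite, $\rho_i$ is a bijection on each fiber, hence a bijection $U\to U$. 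Then $p_2^{-1}(U)\cap W=\{(\rho_i^{-1}(w),w):w\in U,\ 1\le i\le m\}$ exhibits $p_2|_W$ as an $m$-fold covering over $U$, and $U$ can be arranged to contain any prescribed point of $\Omega'$.

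Your route runs into two difficulties. First, over $\Omega''$ you correctly get that $W'=W\cap(\Omega''\times_f\Omega'')$ is a union of connected components of $\Omega''\times_f\Omega''$, but this only shows $p_2|_{W'}$ covers $p_2(W')$; when $\Omega''$ is disconnected you have not established that $p_2(W')=\Omega''$, nor that the fiber size of $p_2|_{W'}$ is the same constant $m$ as that of $p_1|_{W'}$ (the involution $(z,w)\mapsto(w,z)$ permutes components of $\Omega''\times_f\Omega''$ but need not preserve $W'$). Second, and more seriously, the ``bad case'' you identify—$(z_n,w_n)\in W$ with $w_n\to w_0\in\Omega'$ but $z_n\to z^*\in\partial\Omega\setminus Z$—is exactly the statement that $p_2|_W$ fails to be proper, and I do not see how ``path lifting of $p_1|_W$'' rules it out: lifts via $p_1|_W$ track the first coordinate, not the second, and extending sheets into the ambient $\Omega_1\times_f\Omega_1$ produces a branch $\psi$ with $\psi(w_0)=z^*\in\partial\Omega$ without any contradiction. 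The finite-fiber bijection trick above is what closes this gap; once you have it, the detour through $\Omega''$ is unnecessary.
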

\begin{proof}

Let $z\in \Omega'.$ Let $X\subset \overline{\Omega}$  be a closed set of measure 0
such that $X\cap f^{-1}(f(z))=\emptyset$ and $\Omega'\setminus X$ is simply connected. 
Then $p_1:W\setminus p_1^{-1}(X)\to \Omega'\setminus X$
 is an $m$-fold trivial covering for some $m$.
So there exist holomorphic 
embeddings $\rho_{i}:\Omega'\setminus X\to \Omega', 1\leq i\leq m$ such that for any  
$u\in \Omega'\setminus X$ we have 
$$p_{1}^{-1}(u)\cap W=\lbrace (u, \rho_i(u)), 1\leq i\leq m\rbrace.$$  Put 
$U=\Omega'\setminus f^{-1}(f(X)).$ Then $z\in U, f^{-1}(f(U))\cap \Omega=U$ and
$\Omega'\setminus U$ has measure 0. Since $\rho_i$  induces a bijection
on $f^{-1}(f(u))\cap \Omega$ for all $u\in U,$ it follows that
$\rho_i:U\to U$
is a bijection for all $1\leq i\leq m.$ 
Remark that the set of bijections $\lbrace\rho_i\rbrace_{1\leq i\leq m}$ is not closed under
taking compositions or inverses.

Therefore
$$p_2:p_2^{-1}(U)\cap W=\lbrace (\rho_i^{-1}(z), z), z\in U, 1\leq i\leq m\rbrace\to U$$ is an $m$-fold trivial covering. 
Since $U$ is a neighbourhood of $z$, we conclude that $p_2|_W:W\to \Omega'$ is a covering map.

Let $(a_n)=(z_n, w_n)\in W$ be a sequence in $W$ converging to the boundary $\partial(W).$ 
Since $p_1|_W, p_2|_W:W\to \Omega'$ are proper mappings as shows above, we get that both $(z_n), (w_n)$
converge to $\partial(\Omega').$ Therefore, $\partial(W)\subset \partial(\Omega')\times_f\partial(\Omega').$

Let $z'\in \partial(\Omega)\setminus Z.$ Let $Y\subset \Omega'$ be a simply connected 
open subset such that  $\overline{Y}$ contains a neighbourhood
of $z'$ in $\overline{\Omega}.$ Just as above, \\let $\rho_i:Y\to \Omega', 1\leq i\leq m$  be holomorphic
embeddings such that $$p_1^{-1}(Y)\cap W=\lbrace (y, \rho_i(y)), 1\leq i\leq m, y\in Y\rbrace.$$ 
Without loss of generality $\overline{\rho_i(Y)}\cap \overline{\rho_j(Y)}=\emptyset, i\neq j.$
Thus, $(z', \rho_i(z')), 1\leq i\leq m$ are distinct points in $p_1^{-1}(z')\cap \partial(W).$
By shrinking $Y$ further we may assume that each $\rho_i$ extends to a holomorphic
embedding from a neighbourhood of $\overline{Y}$ into a neighbourhood of $\overline{\Omega}.$
Now let $w\in \partial(\Omega)\setminus Z$ be such that $(z, w)\in \partial(W).$
Then, there is a sequence $(z_n, w_n)\in W$ converging to $(z', w).$ We may assume that
$z_n\in Y$ and $w_n=\rho_i(z_n)$ for a fixed $i.$ So $w=\rho_i(z').$ Therefore
$$\overline{W}\cap p_1^{-1}(\overline{Y})=\lbrace (y, \rho_i(y)), y\in \overline{Y}, 1\leq i\leq m\rbrace$$
Hence $p_1|_{\overline{W}}:\overline{W}\to \overline{\Omega}\setminus Z$ is an $m$-fold covering.

\end{proof}

Next we will define a certain open subset $W_f\subset \Omega'\times_f\Omega'$ which
will play a crucial role.

\begin{defin}\label{W}
Let $W_f\subset \Omega'\times_F\Omega'$ be the union of all connected components 
$W$ of $\Omega'\times_F\Omega'$ 
 such that the projection $p_1|_W:W\to \Omega'$ is a covering map.

\end{defin}

The following lemma summarizes  properties of $W_f$ that will be used later.

\begin{lemma}

$W_f$ is symmetric: if $(z, w)\in W_f$ then $(w, z)\in W_f.$
\\$W_f\times_{f}W_f=W_f$: if $(z, t)\in W_f$ and $(t, s)\in W_f$, then $(z, w)\in W_f.$

\end{lemma}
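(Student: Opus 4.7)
The symmetry is immediate from Lemma~\ref{proper}. Let $\sigma:\Omega'\times_f\Omega'\to\Omega'\times_f\Omega'$ denote the swap involution $\sigma(z,w)=(w,z)$; this is a biholomorphism satisfying $p_1\circ\sigma=p_2$ and $p_2\circ\sigma=p_1$. If $W$ is a connected component of $\Omega'\times_f\Omega'$ on which $p_1$ is a covering, then $p_2|_W$ is also a covering by Lemma~\ref{proper}, and $\sigma(W)$ is a connected component with $p_1|_{\sigma(W)}\circ\sigma|_W=p_2|_W$ a covering; hence $\sigma(W)\subset W_f$.

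For the second property (read as $(z,t),(t,s)\in W_f\Rightarrow(z,s)\in W_f$), let $W_1,W_2\subset W_f$ be the connected components containing $(z,t)$ and $(t,s)$. Form the fiber product
$$T=W_1\times_{\Omega'}W_2=\{(a,b,c):(a,b)\in W_1,\ (b,c)\in W_2\},$$
assembled along $p_2:W_1\to\Omega'$ and $p_1:W_2\to\Omega'$. Because both of these are coverings (Lemma~\ref{proper}), the projection $\pi:T\to\Omega'$ sending $(a,b,c)\mapsto a$ is a covering of $\Omega'$. Define $\psi:T\to\Omega'\times_f\Omega'$ by $\psi(a,b,c)=(a,c)$; then $p_1\circ\psi=\pi$, and since both $\pi$ and $p_1$ are local biholomorphisms between equidimensional complex manifolds, $\psi$ is itself a local biholomorphism.

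Let $T_0$ be the connected component of $T$ containing $(z,t,s)$ and let $C$ be the connected component of $\Omega'\times_f\Omega'$ containing $(z,s)$. Since $T_0$ is clopen in $T$ and $\pi$ is a proper local homeomorphism onto a connected base, $\pi|_{T_0}:T_0\to\Omega'$ is still a covering. As $\psi$ is open, $\psi(T_0)$ is an open subset of $C$. The plan is to prove $\psi(T_0)=C$; then an evenly covered chart $U$ for $\pi|_{T_0}$ yields continuous branch functions $b_i,c_i$ on each sheet $V_i\cong U$, so $\psi(V_i)=\{(a,c_i(a)):a\in U\}$ is a graph mapping homeomorphically to $U$ under $p_1$. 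On the connected chart $U$ two such graphs are either identical or disjoint, since the fibers of $p_1$ are discrete; after deduplication we obtain an evenly covered structure for $p_1|_C$, proving $C\subset W_f$ and hence $(z,s)\in W_f$.

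The closedness of $\psi(T_0)$ in $C$ is the main obstacle. Given $(a_n,c_n)\in\psi(T_0)$ tending to $(a,c)\in C$, choose $b_n$ with $(a_n,b_n,c_n)\in T_0$. The projection $p_1:W_1\to\Omega'$ is proper (a covering with finite fibers), so the preimage of a compact neighborhood of $a$ in $\Omega'$ is compact in $W_1$; along a subsequence $(a_n,b_n)\to(a,b)$ with $(a,b)\in W_1$, using that $W_1$ is closed in $\Omega'\times_f\Omega'$. Then $(b_n,c_n)\to(b,c)\in W_2$ by the same argument, so $(a,b,c)\in T$, and because $T_0$ is closed in $T$ we get $(a,b,c)\in T_0$. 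Hence $(a,c)=\psi(a,b,c)\in\psi(T_0)$, giving closedness and completing the plan.
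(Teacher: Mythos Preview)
Your proof is correct. The symmetry argument is exactly the paper's. For the transitivity, your approach and the paper's differ mainly in packaging.

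The paper first records (via the boundary statement in Lemma~\ref{proper}) an alternative characterization: $W_f$ is the union of \emph{all} open subsets $U\subset\Omega'\times_f\Omega'$ for which $p_1|_U$ is a covering, since any such $U$ has $\partial U\subset\partial\Omega'\times_f\partial\Omega'$ and hence is clopen, so a union of components. With this in hand, the paper simply asserts that the ``composed'' set $W_{p_1}\times_{p_2}W=\{(z,s):\exists\,t,\ (z,t),(t,s)\in W_f\}$ is again such a subset (the covering structure being obtained by composing local trivializations of $p_1|_{W_f}$ and $p_2|_{W_f}$), and concludes it lies in $W_f$.

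You instead work component by component: form the genuine fiber product $T$, restrict to the component $T_0$ through $(z,t,s)$, and prove directly that $\psi(T_0)$ is open and closed in the ambient component $C$, hence equal to $C$; then you read off an evenly covered structure for $p_1|_C$ from that of $\pi|_{T_0}$. This is more hands-on and supplies the details the paper leaves implicit (in particular the closedness, which you get from properness of finite coverings). The paper's route is shorter once the maximality characterization of $W_f$ is isolated; yours avoids that intermediate step at the cost of the open/closed argument. One small point: when you say two graph-sheets over a connected $U$ are ``either identical or disjoint, since the fibers of $p_1$ are discrete,'' the cleanest justification is that both branches $c_i,c_j$ satisfy $f\circ c_i=f|_U$ with $f$ locally biholomorphic on $\Omega'$, so the set $\{c_i=c_j\}$ is open as well as closed in $U$.
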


\begin{proof}

Since $p_1: \Omega'\times_F\Omega'\to \Omega'$ is a locally biholomorphic mapping, it follows that W
is the union of all $U\subset \Omega'\times_F\Omega'$ such that $p_1:U\to \Omega'$ is a covering.
  In fact, it is easy to see that 
 $W$ is a union of connected components of  $\Omega'\times_F\Omega'.$
In particular,  the diagonal $\lbrace (z, z), z\in \Omega'\rbrace$ is a connected component
of $W.$ It also follows from Lemma \ref{proper} that $W$ is symmetric: If $(z, w)\in W$ then $(w, z)\in W.$ 

Notice also that $W_f=W_{p_1}\times_{p_2}W$, where 
$$W_{p_1}\times_{p_2}W=\lbrace (z, w)\in W\times W| p_1(z)=p_2(w)\rbrace$$ 
denotes the pullback of $p_1, p_2:W\to \Omega'.$ Indeed, 
if $U\subset \Omega'\times_F\Omega'$
is a subset such that $p_1:U\to \Omega'$ is a covering, then so is
$U_{p_1}\times_{p_2}U\to \Omega'.$ Thus $U_{p_1}\times_{p_2}U\subset W.$
 
\end{proof}

Remark that if $f:\Omega\to f(\Omega)$ is a proper mapping, then $p_1:\Omega'\times_f\Omega'\to \Omega'$
is a covering, thus in this case $W_f=\Omega'\times_f\Omega'.$

\section{Commutants of $T_f$}

At first we show the following preliminary

\begin{lemma}\label{commute} 

Let $S:A^2(\Omega)\to A^2(\Omega)$ be a bounded
linear operator which commutes with $T_f=\lbrace T_{f_i}, i=1, \cdots, n\rbrace.$ Then there exists
a function $\Phi$ on $\Omega'\times_f\Omega'$ such that for any $g\in A^2(\Omega)$ we have
 $$S(g)(z)=\sum_{w\in f^{-1}(f(z))\cap \Omega}\Phi(z, w)g(w), z\in\Omega'.$$
Moreover, $\Phi$ is holomorphic on $\Omega''\times_f\Omega''.$
\end{lemma}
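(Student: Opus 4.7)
I would build $\Phi$ fiber-by-fiber via the Bergman-space Nullstellensatz of Lemma~\ref{Nullst}, and then upgrade it to a holomorphic function on $\Omega''\times_f\Omega''$ by a local matrix inversion. For the construction, fix $z\in\Omega'$. Because $S$ commutes with each $T_{f_i}$, the identity $S\bigl(\sum_i (f_i-f_i(z))h_i\bigr)(z)=\sum_i(f_i(z)-f_i(z))\,S(h_i)(z)=0$ holds for all $h_i\in A^2(\Omega)$, so the functional $g\mapsto S(g)(z)=\langle g,S^{*}K_z\rangle$ annihilates $\sum_i(f_i-f_i(z))A^2(\Omega)$. Since $z\in\Omega'$, the fiber $f^{-1}(f(z))\cap\overline{\Omega}$ avoids $V(J_f)$, so Lemma~\ref{Nullst} applied to the shifted tuple $f-f(z)$ identifies that annihilator with $\bigoplus_{w\in f^{-1}(f(z))\cap\Omega}\mathbb{C}K_w$. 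Writing $S^{*}K_z=\sum_{w}\overline{\Phi(z,w)}K_w$ and expanding via the reproducing property gives the claimed formula; note that for $z\in\Omega'$ the entire fiber $f^{-1}(f(z))\cap\Omega$ automatically lies in $\Omega'$ (no point of it can be a critical point of $f$), so $\Phi$ is defined on all of $\Omega'\times_f\Omega'$.

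For holomorphicity on $\Omega''\times_f\Omega''$ I would exploit the fact, already noted in the paper, that $f|_{\Omega''}\colon\Omega''\to f(\Omega'')$ is a finite holomorphic covering. Fix $z_0\in\Omega''$; on a sufficiently small neighborhood $U\subset\Omega''$ of $z_0$ there are holomorphic local sections $w_1,\dots,w_m\colon U\to\Omega''$ with $f(w_j(z))=f(z)$, so $f^{-1}(f(z))\cap\Omega=\{w_j(z)\}_j$ for $z\in U$. Choose $g_1,\dots,g_m\in A^2(\Omega)$ — polynomials suffice, since the $w_j(z_0)$ are distinct interior points — so that the interpolation matrix $M(z_0)=\bigl[g_k(w_j(z_0))\bigr]_{k,j}$ is invertible; by continuity $M(z)$ stays invertible on a possibly smaller $U$. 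Applying the pointwise formula to each $g_k$ yields the linear system
$$[S(g_k)(z)]_{k}\;=\;M(z)\,[\Phi(z,w_j(z))]_{j},$$
and since $S(g_k)(z)$ and the entries of $M(z)^{-1}$ are holomorphic in $z$, solving this system shows $z\mapsto\Phi(z,w_j(z))$ is holomorphic on $U$; composing with the biholomorphic chart $z\mapsto(z,w_j(z))$ on $\Omega''\times_f\Omega''$ delivers the claimed holomorphicity of $\Phi$.

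The main subtlety I anticipate is the holomorphic step, and specifically producing the invertible interpolation matrix $M(z_0)$ out of elements of $A^2(\Omega)$; this reduces to the standard linear independence of point evaluations at distinct interior points on a dense separating subspace such as the polynomials, so it should cause no real trouble. The pointwise step and the chart structure on $\Omega''\times_f\Omega''$ are immediate consequences of the already-established properties of $\Omega''$ and of Lemma~\ref{Nullst}.
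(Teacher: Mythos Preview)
Your proposal is correct and follows essentially the same route as the paper: the pointwise formula comes from observing that $S^{*}K_z$ lies in the span of the $K_w$, $w\in f^{-1}(f(z))\cap\Omega$, via Lemma~\ref{Nullst} applied to $f-f(z)$, and holomorphicity on $\Omega''\times_f\Omega''$ is obtained by inverting an interpolation matrix built from polynomials evaluated along the local sections of the covering $f|_{\Omega''}$. The only cosmetic difference is that the paper phrases the first step as ``$S^{*}$ preserves the joint kernel $\bigcap_i\ker T^{*}_{f_i-f_i(z)}$'' rather than ``the functional $g\mapsto S(g)(z)$ annihilates the ideal,'' but these are equivalent.
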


\begin{proof}
We claim that for any $z\in \Omega',$ we have $$S^{*}(K_z)\in \sum_{w\in f^{-1}(f(z))} \mathbb{C}K_{w}.$$
Indeed, given $g_i\in A^{\infty}(\Omega),$ then
$$\bigcap Ker(T_{g_i}^{*})=(\sum g_iA^2(\Omega))^{\perp}.$$ 
Applying this to $g_i=f_i-f_i(z),$ and  using \ref{Nullst} we get that
 $$\bigcap_i T^{*}_{f_i-f_i(z)}=\sum_{w\in f^{-1}(f(z))} \mathbb{C}K_{w}$$ and $S^{*}$ preserves this space.
 In particular we may write
 $$S^{*}(K_z)=\sum_{w\in f^{-1}(f(z))} \overline{\Phi(z, w)}K_{w}$$ for some $\Phi(z, w)\in \mathbb{C}.$
Thus for any $g\in A^2(\Omega)$, we have
$$\langle g, S^{*}(K_w)\rangle=\langle S(g), K_w\rangle=S(g)(w)=\sum_{w\in f^{-1}(f(z))} \Phi(z, w)g(w).$$
Recall that $\Omega''\to f(\Omega'')$ is a covering map. Thus, for any $z\in \Omega'',$
there exists an open neighbourhood $z\in U\subset \Omega''$
and holomorphic embeddings  
$\rho_1,\cdots ,\rho_m:U\to \Omega$  
such that $$f^{-1}(f(z))=\lbrace \rho_1(z),\cdots , \rho_m(z)\rbrace, z\in U.$$
Denote $\Phi(z, \rho_i(z))$ by $\phi_i(z).$ Thus,  
$$S(g)(z)=\sum_i \phi_i(z)g(\rho_i(z)), g\in A^2(\Omega), z\in U.$$

Fix $z\in U.$ Let us choose polynomials $g_1,\cdots, g_m\in \mathbb{C}[z_1,\cdots, z_n]$ 
 such that the matrix $A=g_i(\rho_j(z))$ is nondegenerate.
Thus, its inverse is a holomorphic matrix in a neighbourhood of $z.$ Therefore, 
$(\psi_i)_{1\leq i\leq m}=A^{-1}(S(g_i)_{1\leq i\leq m})$ is holomorphic.
So, $\Phi$ is holomorphic on $\Omega''\times_f\Omega''.$

\end{proof}

We have the following result, which is well-known when $\Omega$ is a unit disc in $\mathbb{C}$ and $f$ is a finite Blaschke
product. 

\begin{theorem}\label{key}

Suppose that a bounded linear operator $S:A^2(\Omega)\to A^2(\Omega)$ commutes with $T_f.$
Then there exists a holomorphic function $\Phi\in A(W_f)$ ($W_f$ as in Definition \ref{W}) 
such that for any $z\in \Omega', g\in A^2(\Omega)$ one has $S(g)(z)=\sum_{(z, w)\in W_f} \Phi(z, w)g(w).$

\end{theorem}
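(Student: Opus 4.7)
The plan is to promote the function $\Phi$ constructed pointwise in Lemma \ref{commute} to a holomorphic function on $W_f$ that is supported on $W_f$. Lemma \ref{commute} already gives $\Phi$ defined on all of $\Omega'\times_f\Omega'$, holomorphic on the open dense subset $\Omega''\times_f\Omega''$, and satisfying $S(g)(z)=\sum_{w\in f^{-1}(f(z))\cap\Omega}\Phi(z,w)g(w)$ for every $z\in\Omega'$. It therefore suffices to prove two things: (i) $\Phi$ vanishes on every connected component of $\Omega'\times_f\Omega'$ not contained in $W_f$, collapsing the sum to $\{w:(z,w)\in W_f\}$; and (ii) $\Phi|_{W_f}$ extends holomorphically across the real-codimension-one locus $W_f\setminus(\Omega''\times_f\Omega'')$.

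For (i), let $W$ be a connected component of $\Omega'\times_f\Omega'$ whose projection $p_1|_W$ is not a covering. Since $p_1|_W$ is still a local biholomorphism between $n$-complex manifolds and $W$ is closed in $\Omega'\times_f\Omega'$, the failure of covering forces the existence of a simply connected open subset $V'\subset\Omega''$, a holomorphic section $\rho:V'\to\Omega$ of $p_1|_W$, and a limit point $z_0\in\overline{V'}\cap\Omega$ such that $\rho(z)\to w_0'\in\partial\Omega$ as $z\to z_0$ in $V'$. On $V'$ we have $\Phi(z,\rho(z))=\phi(z)$ holomorphic. Testing $S$ on the normalized Bergman kernel $g_z:=k_{\rho(z)}$ and evaluating at $z$ itself produces the estimate $|\phi(z)|\,\|K_{\rho(z)}\|_2\le\|S\|\,\|K_z\|_2+E(z)$, where $E(z)$ collects contributions from the other local sheets. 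Using the standard fact that $k_\zeta(w)\to 0$ as $\zeta\to\partial\Omega$ for each fixed interior $w$ (a consequence of $K(\zeta,\zeta)\to\infty$ together with smoothness of $K_w$ up to $\partial\Omega$ on bounded smooth pseudoconvex domains), the term $E(z)$ stays bounded; since $\|K_{\rho(z)}\|_2\to\infty$ as $z\to z_0$, we conclude $\phi(z)\to 0$ along the runoff. The main obstacle is to promote this pointwise vanishing at the runoff boundary to $\phi\equiv 0$ on $V'$. I would attempt this by showing that, because $p_1|_W$ fails to be a covering in a stable way, the runoff locus sweeps out a subset of $\partial V'$ of positive boundary measure, so that a boundary-uniqueness result (a several-variable analogue of Privalov's theorem) forces $\phi\equiv 0$; alternatively one could iterate the argument using the symmetric role of $p_2$ (Lemma \ref{proper}) to propagate the vanishing across $V'$.

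For (ii), given $(z_0,w_0)\in W_f$, the covering property of $p_1|_{W_f}$ yields a holomorphic local section $\rho$ of $p_1|_{W_f}$ on a neighborhood $U$ of $z_0$ in $\Omega'$, valid even when $z_0\in\Omega'\setminus\Omega''$. Enumerate all sheets $\rho_1,\ldots,\rho_m$ of $W_f$ over $U$, and choose polynomials $g_1,\ldots,g_m\in\mathbb{C}[z_1,\ldots,z_n]$ such that the matrix $\bigl(g_j(\rho_i(z_0))\bigr)_{ij}$ is invertible; shrinking $U$, its inverse is holomorphic in $z$. Cramer's rule then expresses each $\phi_i(z)=\Phi(z,\rho_i(z))$ as a $z$-holomorphic combination of the holomorphic functions $S(g_j)(z)\in A^2(\Omega)$, exactly as in the proof of Lemma \ref{commute}. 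This extends $\Phi$ holomorphically across $W_f\setminus(\Omega''\times_f\Omega'')$ and, combined with (i), yields the desired $\Phi\in A(W_f)$.
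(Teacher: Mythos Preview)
Your step (ii) is fine and matches the paper's argument. The gap is in step (i): the kernel estimate you propose only yields $\phi(z)\to 0$ along sequences $z\to z_0$ for which $\rho(z)\to\partial\Omega$, and you yourself flag that promoting this to $\phi\equiv 0$ is ``the main obstacle.'' The Privalov-type rescue you sketch is not available in this generality: nothing guarantees that the runoff set has positive boundary measure (indeed, the bad sheet could touch $\partial\Omega$ only along a thin analytic set), and several-variable boundary uniqueness theorems require hypotheses you have not secured. Your bound on the cross-term $E(z)$ is also shaky, since the other sheets $\rho_j(z)$ need not stay in a fixed compact set as $z\to z_0$.

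The paper avoids all of this by exploiting the standing assumption that $f$ extends holomorphically to a neighbourhood $\Omega_1\supset\overline{\Omega}$. Locally near any $z\in\Omega'$ one can then take the sections $\rho_1,\dots,\rho_l$ into $\Omega_1$, not merely into $\Omega$. For each sheet $i$ one builds a polynomial $h_i^w(\cdot)$ (depending holomorphically on $w$) that vanishes at $\rho_j(w)$ for all $j\neq i$ but not at $\rho_i(w)$. The point is that $w\mapsto S(h_i^w)(w)=\langle h_i^w, S^*K_w\rangle$ is holomorphic on all of $Y$, regardless of whether $\rho_i(w)\in\Omega$; when $\rho_i(w)\in\Omega$ it equals $\Phi(w,\rho_i(w))\,h_i^w(\rho_i(w))$, and when $\rho_i(w)\notin\Omega$ it equals $0$ (the sum defining $S(h_i^w)(w)$ simply has no $i$-th term). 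If $\rho_i(Y)\not\subset\Omega$, then since $\rho_i$ is open there is a nonempty open subset of $Y$ on which $\rho_i(w)\in\Omega_1\setminus\overline{\Omega}$, so the holomorphic function $S(h_i^w)(w)$ vanishes on an open set and hence identically. This forces $\Phi(\cdot,\rho_i(\cdot))\equiv 0$ on $Y$ with no appeal to boundary uniqueness. The upshot is that the support $W'$ of $\Phi$ satisfies: over each small $Y$, the sheets of $W'$ are exactly those $\rho_i$ with $\rho_i(Y)\subset\Omega$, so $p_1|_{W'}$ is a covering and $W'\subset W_f$.
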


\begin{proof}

We know from Lemma \ref{commute} that there exists a function $\Phi$ on $\Omega'\times_f\Omega'$ such that 
$$S(g)(z)=\sum_{w\in f^{-1}(f(z))} \Phi(z, w)g(w), z\in \Omega', g\in A^2(\Omega).$$ Moreover, $\Phi$ is
holomorphic on $\Omega''\times_f \Omega'',$ where recall that $\Omega''=\Omega'\setminus f^{-1}(f(\partial\Omega))).$
Let us denote by $W'$ the support of $\Phi$ in $\Omega'\times_f\Omega'.$ We will prove that $p_1|_{W'}:W'\to \Omega'$
is a covering map.

Let $z\in \Omega'.$ 
Let $\Omega_1$ be a neighbourhood of $\overline{\Omega}$ such that $f$ is extends to a holomorphic mapping on it.
We will follow very closely Thompson's argument \cite{Th}.
Let $Y\subset \Omega'$ be a small neighbourhood of $z$, and let $\rho_1,\cdots, \rho_l:Y\to \Omega_1$
be holomorphic embeddings such that $$f(\rho_i(w))=w, f^{-1}(f(w))\cap \overline{\Omega}\subset \lbrace \rho_i(z)_{1\leq i\leq l}\rbrace.$$
Let $P_z\subset \lbrace 1,\cdots, l\rbrace$ be defined ass follows: $i\in P_z$ if there exists $w\in Y$
 so that $\rho_i(w)\in \Omega$ and  $\Phi(w, \rho_i(w))\neq 0.$ By making $Y$ smaller if necessary, we may
assume that $\overline{\rho_i(Y)}\cap \overline{\rho_j(Y)}=\emptyset$ for $i\neq j.$
We claim that for all $i\in P_z, \rho_i(Y)\subset \Omega.$ Indeed,
suppose that for some $i,$ $\rho_i(Y)$ is not a subset of $\Omega.$ 
Let $\epsilon>0$ be such that $$\epsilon<\frac{d(\rho_i(Y), \rho_j(Y))}{\sqrt{n}}, j\neq i.$$
For each $j\neq i$ let us pick $k$ such that $|z_k-w_k|>\epsilon$ for all $z\in \rho_i(Y), w\in \rho_j(Y).$ 
For $w\in Y,$ put $$h_i^w(z)=\prod_{j\neq i}(z_k-\rho_j(w)_k)\in \mathbb{C}[z_1,\cdots ,z_n].$$
Then $h_i^w(z)$ vanishes
  on $\rho_j(w), j\neq i$ and $h_i^w(\rho_i(w))\neq 0.$ It follows that
  $S(h^w_j(z))(w)=\langle h^w_j, S^{*}K_w\rangle$ 
 is a holomorphic
function on $U.$ Then the function $S(h^w_i(z))(w)=\Phi(w, \rho_i(w))h^w_i(\rho_i(w))$ is not identically 0, but
 vanishes on $\rho_i^{-1}(\Omega_1\setminus \Omega)$, which contains a nonempty
open subset by the assumption (recall that $\rho_i$ is an open mapping). Hence $S(h^w_i(z))(w)=0$ for all $w\in Y,$ a contradiction.

To summarize, we have holomorphic embeddings $\rho_i:Y\to \Omega_1, 1\leq i\leq l$ 
and a subset $P_z\subset \lbrace{1,\cdots , l\rbrace},$ such that 
$f(\rho_i(w))=F(w), w\in Y,$ and for any $i\in P_z, \rho_i(Y)\subset \Omega',$ there exists $w\in Y,$ so
that $\Phi(w, \rho_i(w))\neq 0.$ Moreover, $\Phi(w, \rho_j(w))=0$ for all $j\notin P_z.$
Thus, for any $w\in Y$ we have $\lbrace (w, \rho_i(w))_{i\in P_z}\rbrace=p_1^{-1}(w)\cap W'.$
Therefore $p_1|_{W'}:W'\to \Omega'$ is a covering. Hence, $W'$ is a union of connected components
of $W_f.$ Let us extend $\Phi$ to $W$ by 0 on $W\setminus W'.$ Then for any
$g\in A^2(\Omega), z\in \Omega'$ we have 
$$S(g)(z)=\sum_{(z, w)\in W_f} \Phi(z, w)g(w).$$
It can be shown that $\Phi$ is holomorphic exactly as in the end of the proof of Lemma \ref{commute}.

\end{proof}

The following statement follows immediately from the well-known
localisation property of the Bergman kernel [\cite{Oh}, Localisation Lemma, page 2], combined
with the transformation formula of the Bergman kernel function under a biholomorphic map.

\begin{prop}\label{kernel} Let $\Omega\subset \mathbb{C}^n$ be
a smooth bounded pseudoconvex domain. Let $z^1, z^2\in \partial \Omega$ and $z^1\in U_1, z^2\in U_2$
be open neighbourhoods, such that there exists a biholomorphic mapping
 $\rho:\overline{\Omega}\cap U_1\to \overline{\Omega}\cap U_2,$ so that $\rho(z^1)=z^2.$
Then $\|K_{w}\|=O(\|K_{\rho(w)}\|), w\in U_1\cap\Omega$ and $\lim_{w\to \partial\Omega} \|K_{w}\|= \infty.$
\end{prop}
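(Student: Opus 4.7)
The plan is to reduce the first assertion to the biholomorphic transformation formula for the Bergman kernel, bridged by the Ohsawa localisation lemma, and to derive the second from the well-known hyperconvexity of smooth bounded pseudoconvex domains. Throughout I use $\|K_w\|^2=K_\Omega(w,w)$.

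First, by shrinking $U_1$ and $U_2$ if necessary, I would arrange that $\rho$ extends to a biholomorphism between neighbourhoods of $\overline{\Omega}\cap U_1$ and $\overline{\Omega}\cap U_2$ sending $z^1$ to $z^2$; then $|J_\rho|$ is bounded above and away from $0$ on $\overline{\Omega}\cap U_1$. Domain monotonicity of the Bergman kernel (a one-line extremal argument using $\|g\|_{A^2(\Omega\cap U_1)}\leq \|g\|_{A^2(\Omega)}$) gives the trivial inequality
$$K_\Omega(w,w)\leq K_{\Omega\cap U_1}(w,w),\qquad w\in\Omega\cap U_1.$$
In the opposite direction, Ohsawa's localisation lemma furnishes a constant $C>0$ and a smaller neighbourhood $U_2'\Subset U_2$ of $z^2$ with
$$K_{\Omega\cap U_2}(v,v)\leq C\,K_\Omega(v,v),\qquad v\in\Omega\cap U_2'.$$

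Second, I would apply the transformation formula to the biholomorphism $\rho:\Omega\cap U_1\to\Omega\cap U_2$, obtaining
$$K_{\Omega\cap U_1}(w,w)=|J_\rho(w)|^2\,K_{\Omega\cap U_2}(\rho(w),\rho(w)).$$
Chaining these three bounds on the neighbourhood $\rho^{-1}(U_2')\cap\Omega$ of $z^1$, and using the bound on $|J_\rho|$, yields $\|K_w\|^2\leq C'\,\|K_{\rho(w)}\|^2$ for $w$ in that neighbourhood. On the complementary region within $U_1\cap\Omega$, which after a suitable shrinking of $U_1$ is relatively compact in $\Omega$, both $\|K_w\|$ and $\|K_{\rho(w)}\|$ are continuous and strictly positive, so the $O$-estimate holds automatically on that region by a compactness argument. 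Combining the two regimes gives $\|K_w\|=O(\|K_{\rho(w)}\|)$ on all of $U_1\cap\Omega$.

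For the divergence statement, I would invoke that every smooth bounded pseudoconvex domain is hyperconvex (Demailly), whence its diagonal Bergman kernel tends to $\infty$ at every boundary point; alternatively, one reduces via the same localisation lemma to a strongly pseudoconvex local model (e.g.\ a ball touching $\partial\Omega$ internally after a local change of coordinates), where divergence of the diagonal Bergman kernel at the boundary is classical.

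The principal obstacle, to the extent there is one, is invoking the correct quantitative form of the localisation lemma, but since the author has already supplied the precise reference this step is bookkeeping.
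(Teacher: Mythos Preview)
Your argument is correct and matches the paper's approach exactly: the paper gives no detailed proof but simply asserts that the statement ``follows immediately from the well-known localisation property of the Bergman kernel [Ohsawa, Localisation Lemma, page 2], combined with the transformation formula of the Bergman kernel function under a biholomorphic map,'' and you have supplied precisely that chain (monotonicity, Ohsawa localisation, transformation formula, compactness away from the boundary). Your treatment of the divergence via hyperconvexity/Demailly is a legitimate alternative to reading it off directly from Ohsawa's boundary-behaviour paper, but both routes are standard and the paper does not single one out.
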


Below we will use the following standard fact. We include its proof for a reader's convenience.
\footnote{Communicated to us by S. Sahutoglu}
\begin{lemma}\label{Sonm}

Let $\Omega\subset \mathbb{C}^n$ be a smooth bounded pseudoconvex domain.
Then $k_w\to 0$ weakly as $w\to \partial{\Omega}$

\end{lemma}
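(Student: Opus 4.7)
The plan is the standard density argument, combined with Proposition \ref{kernel}. Since $\|k_w\|=1$ for every $w\in\Omega$, the family $\{k_w\}$ is uniformly bounded in $A^2(\Omega)$. To establish weak convergence $k_w\rightharpoonup 0$ as $w\to\partial\Omega$, it therefore suffices, by a standard $3\varepsilon$--argument, to verify that $\langle g,k_w\rangle\to 0$ as $w\to\partial\Omega$ for $g$ in a dense subspace of $A^2(\Omega)$.

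The natural dense subspace to use is $A^\infty(\Omega)$, which is dense in $A^2(\Omega)$ by Catlin's theorem \cite{Ca} (already invoked earlier in the paper). For any $g\in A^\infty(\Omega)$ we use the reproducing property to compute
\[
\langle g,k_w\rangle \;=\; \frac{\langle g,K_w\rangle}{\|K_w\|} \;=\; \frac{g(w)}{\|K_w\|}.
\]
Since $g$ is continuous on $\overline{\Omega}$, the numerator is uniformly bounded by $\|g\|_{L^\infty(\overline{\Omega})}$, while by Proposition \ref{kernel} the denominator satisfies $\|K_w\|\to\infty$ as $w\to\partial\Omega$. Hence $\langle g,k_w\rangle\to 0$.

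Combined with $\|k_w\|\le 1$, this gives the weak convergence $k_w\rightharpoonup 0$ for an arbitrary $h\in A^2(\Omega)$: given $\varepsilon>0$, pick $g\in A^\infty(\Omega)$ with $\|h-g\|_2<\varepsilon$, and estimate
\[
|\langle h,k_w\rangle|\le|\langle h-g,k_w\rangle|+|\langle g,k_w\rangle|\le \varepsilon+|g(w)|/\|K_w\|,
\]
the second term being $<\varepsilon$ for $w$ sufficiently close to $\partial\Omega$. There is no serious obstacle here; the only ingredients are Catlin's density result and the boundary blow-up of $\|K_w\|$ recorded in Proposition \ref{kernel}.
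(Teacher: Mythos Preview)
Your proof is correct and is essentially identical to the paper's own argument: both use Catlin's density of $A^{\infty}(\Omega)$ in $A^2(\Omega)$, the reproducing property $\langle g,k_w\rangle=g(w)/\|K_w\|$, and the blow-up $\|K_w\|\to\infty$ to run the same $\varepsilon$-splitting estimate. The only cosmetic difference is that you cite Proposition~\ref{kernel} explicitly for the blow-up, whereas the paper leaves that step implicit.
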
 

\begin{proof}

Let $g\in A^2(\Omega).$ For $\epsilon>0$ let $g^{\epsilon}\in A^{\infty}(\overline{\Omega})$ be
such that $\|g-g^{\epsilon}\|_{A^2(\Omega)}<\epsilon.$ Then we have 
$$|\langle g, k_w\rangle|<\epsilon+\langle g^{\epsilon}, k_w\rangle\leq \epsilon+\|g^{\epsilon}\|_{L^{\infty}(\Omega)}/\|K_w\|_{A^2(\Omega)}$$
Therefore, $\lim \sup |\langle g, k_w\rangle|\leq \epsilon$ as $w\to \partial\Omega.$

\end{proof}

Before proceeding further, let us summarize various choices that we have made in relation to $f, W_f.$

\begin{prop}\label{notation}

\begin{itemize}

\item[(1)] There is an open subset $Y\subset \Omega'$ such that $\partial Y\cap \partial \Omega$ contains a nonempty subset of $\partial\Omega.$ There are holomorphic embeddings
$\rho_i:\bar{Y}\to \overline{\Omega}\setminus Z, 1\leq i\leq m$ such that 
$$p_1^{-1}(Y)\cap W_f=\lbrace (y, \rho_i(y)), y\in Y, 1\leq i\leq m\rbrace,$$
$$\rho_i(\partial(Y)\cap\partial\Omega)=\partial\Omega\cap \partial(\rho_i(Y)),  \rho_i(\bar{Y})\cap\rho_j(\bar{Y})=\emptyset, i\neq j.$$
 
\item[(2)]

There is an open subset $U\subset \Omega'$, such that $\Omega\setminus U$ has measure 0 and biholomorphic mappings $\rho_i:U\to U, 1\leq i\leq m$
such that $$p_1^{-1}(U)\cap W_f=\lbrace (z, \rho_i(z)), z\in U, 1\leq i\leq m\rbrace.$$

\end{itemize}

\end{prop}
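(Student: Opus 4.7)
The plan is to read off both statements as direct corollaries of Lemma \ref{proper}, applied to the covering
$p_1|_{\overline{W_f}}\colon \overline{W_f}\to \overline{\Omega}\setminus Z$ and to its restriction over $\Omega'$.

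For part (1), the first step is to check that $\partial\Omega\setminus Z$ is nonempty, which follows because $V(J_f)$ is a proper analytic hypersurface in a neighbourhood of $\overline{\Omega}$ and so $Z=f^{-1}(f(V(J_f)))$ has measure zero; in particular it cannot contain all of $\partial\Omega$. I would then fix any $z'\in\partial\Omega\setminus Z$ and use Lemma \ref{proper}, which asserts that $p_1$ restricts to a covering $\overline{W_f}\to\overline{\Omega}\setminus Z$, to choose an evenly covered neighbourhood of $z'$. Taking $Y$ to be the intersection of that neighbourhood with $\Omega$, and shrinking once more so that the $m$ local sheets have pairwise disjoint closures (possible since the sheets are locally biholomorphic graphs that meet transversally, if at all, only outside $Y$), yields the holomorphic embeddings $\rho_i\colon \bar Y\to\overline{\Omega}\setminus Z$ with
$$p_1^{-1}(Y)\cap W_f=\{(y,\rho_i(y)) : y\in Y,\ 1\le i\le m\}.$$
The boundary identity $\rho_i(\partial Y\cap\partial\Omega)=\partial\Omega\cap\partial(\rho_i(Y))$ comes from the conclusion of Lemma \ref{proper} that $\partial(W_f)\subset\partial\Omega\times_f\partial\Omega$: a sheet $\rho_i(Y)$ can accumulate on $\partial\Omega$ only above points where $Y$ itself accumulates on $\partial\Omega$.

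For part (2), the construction is exactly the one already carried out inside the proof of Lemma \ref{proper}, so the plan is to isolate it and state it as a separate assertion. Fix any $z\in\Omega'$ and choose a closed measure-zero set $X\subset\overline{\Omega}$ disjoint from $f^{-1}(f(z))$ such that $\Omega'\setminus X$ is simply connected (for instance, a generic real affine slice together with a codimension-one analytic divisor missing the chosen fibre). Over the simply connected base $\Omega'\setminus X$, the covering $p_1|_{W_f}$ trivializes into $m$ sheets and produces holomorphic embeddings $\rho_i\colon\Omega'\setminus X\to\Omega'$. Setting $U=\Omega'\setminus f^{-1}(f(X))$ passes to the $f$-saturation: then $f^{-1}(f(u))\cap\Omega\subset U$ for every $u\in U$, and since $\rho_i$ induces a bijection of each such fibre, $\rho_i\colon U\to U$ is a global bijection. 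Measure-zero of $\Omega\setminus U$ is automatic: $Z$ is measure zero as noted, and $f^{-1}(f(X))$ is measure zero because $f$ is a finite covering away from its critical values, so $f(X\cap\Omega'')$ is a measure-zero subset of $f(\Omega'')$ and its preimage in $\Omega''$ is as well.

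The only real point to be careful about is the boundary bookkeeping in (1), namely extracting a sheet decomposition that extends holomorphically to $\bar Y$ and matches up along $\partial\Omega$. That is precisely the content of the boundary statement in Lemma \ref{proper}, and once it is accepted both parts of the proposition follow without further work; the proposition is essentially a catalogue of what has already been proved, packaged for later use.
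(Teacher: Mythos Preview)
Your approach is essentially the paper's own: for (1) you trivialize the covering $p_1|_{\overline{W_f}}$ from Lemma~\ref{proper} over a simply connected patch touching $\partial\Omega$, invoke $\partial W_f\subset\partial\Omega\times_f\partial\Omega$ for the boundary identity, and shrink for disjoint sheets; for (2) you recycle verbatim the $U$-construction from the proof of Lemma~\ref{proper} (the paper's citation of ``Lemma~\ref{W}'' is evidently a mislabel for the same lemma).

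One small slip worth flagging: your justification that $\partial\Omega\setminus Z\neq\emptyset$ via ``$Z$ has Lebesgue measure zero, hence cannot contain $\partial\Omega$'' is a non sequitur, since $\partial\Omega$ itself has $2n$-dimensional Lebesgue measure zero. A correct argument uses dimension rather than measure: $f(V(J_f))$ has real Hausdorff dimension at most $2n-2$, so it cannot contain $f(\partial\Omega)$, which must cover the $(2n-1)$-dimensional topological boundary of the bounded open set $f(\Omega)$. The paper does not spell this out either, so this is a shared lacuna rather than a defect specific to your write-up.
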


\begin{proof}

Let $Y\subset \Omega\setminus Z$ be a an open subset such that $\bar{Y}$ is simply connected and $\partial(Y)\cap \partial\Omega$ 
contains an open subset of $\partial\Omega.$ Thus $p_1:p_1^{-1}(\bar{Y})\cap \overline{W_f}\to \bar{Y}$ is a trivial covering.
Therefore there exist holomorphic mappings $\rho_i:\bar{Y}\to \overline{\Omega}\setminus Z, 1\leq i\leq m$ such that 
$$p_1^{-1}(Y)\cap W_f=\lbrace (y, \rho_i(y)), y\in Y, 1\leq i\leq m\rbrace.$$ Recall that
 $\partial W_f\subset \partial\Omega\times_f\partial\Omega.$ Therefore, 
 $\rho_i(\bar{Y}\cap \partial\Omega)=\rho_i(\bar{Y})\cap \partial\Omega.$ By shrinking Y further, we get that $\rho_i(\bar{Y})\cap\rho_j(\bar{Y})=\emptyset, i\neq j.$

Part (2) follows directly from the proof of Lemma \ref{W}.

\end{proof}

We have the following

\begin{theorem}\label{compact}

Let $S:A^2(\Omega)\to A^2(\Omega)$ be a compact operator such that it commutes with $T_f.$ 
Then $S=0.$
\end{theorem}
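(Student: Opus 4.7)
The plan is to argue by contradiction. Suppose $S\ne 0$ is a compact operator commuting with $T_f$. By Theorem~\ref{key} there exists a nonzero $\Phi\in A(W_f)$ such that $Sg(z)=\sum_{(z,w)\in W_f}\Phi(z,w)g(w)$. Picking $Y\subset\Omega'$ and holomorphic embeddings $\rho_1,\dots,\rho_m:\bar Y\to\overline{\Omega}\setminus Z$ as in Proposition~\ref{notation}(1) (so that $\partial Y\cap\partial\Omega$ is a non-empty open piece of $\partial\Omega$ and the $\rho_i(\bar Y)$ are mutually disjoint), one has $Sg(z)=\sum_i\phi_i(z)\,g(\rho_i(z))$ on $Y$ with $\phi_i(z):=\Phi(z,\rho_i(z))$ holomorphic on $Y$. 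Since $\Phi\not\equiv 0$ on $W_f$ and the various admissible $Y$ parametrize $W_f$ near the boundary, we may choose $Y$ and the labeling so that some $\phi_k\not\equiv 0$ on $Y$.

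Fix any $y_0\in\partial Y\cap\partial\Omega$ and any sequence $y_n\to y_0$ in $Y$; set $u_n:=\rho_k(y_n)$, so that $u_n\to\rho_k(y_0)\in\partial\Omega$. By Lemma~\ref{Sonm} $k_{u_n}\to 0$ weakly, so compactness of $S$ forces $\|Sk_{u_n}\|\to 0$. Compute
$$\langle Sk_{u_n},k_{y_n}\rangle=\frac{Sk_{u_n}(y_n)}{\|K_{y_n}\|}=\sum_i\frac{\phi_i(y_n)\,K(\rho_i(y_n),u_n)}{\|K_{u_n}\|\,\|K_{y_n}\|}.$$
For $i=k$, since $\rho_k(y_n)=u_n$ and $K(u_n,u_n)=\|K_{u_n}\|^2$, this term equals $\phi_k(y_n)\cdot \|K_{u_n}\|/\|K_{y_n}\|$, and Proposition~\ref{kernel} bounds the ratio $\|K_{u_n}\|/\|K_{y_n}\|$ both above and away from $0$. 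For $i\ne k$, rewrite the term as $\phi_i(y_n)\cdot\overline{\langle k_{\rho_i(y_n)},k_{u_n}\rangle}\cdot \|K_{\rho_i(y_n)}\|/\|K_{y_n}\|$; since $\rho_i(y_n)$ and $u_n$ converge to the distinct boundary points $\rho_i(y_0)\ne\rho_k(y_0)$, the cross-pairing $|\langle k_{\rho_i(y_n)},k_{u_n}\rangle|$ tends to $0$ while the other factors are bounded, so this term vanishes. From $|\langle Sk_{u_n},k_{y_n}\rangle|\le\|Sk_{u_n}\|\to 0$ we conclude $\phi_k(y_n)\to 0$ along every such sequence.

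Hence $\phi_k$ has boundary limit $0$ at every point of the open piece $\partial Y\cap\partial\Omega$; by boundary uniqueness for holomorphic functions vanishing on an open piece of a smooth pseudoconvex boundary (essentially a Schwarz-reflection argument), this forces $\phi_k\equiv 0$ on $Y$, contradicting our choice of $k$. The two technical ingredients that require justification are (a) the off-diagonal decay $|\langle k_{a_n},k_{b_n}\rangle|\to 0$ whenever $a_n\to a_0\ne b_0\leftarrow b_n$ on $\partial\Omega$, which follows from Proposition~\ref{kernel} combined with the fact that $|K(z,w)|$ stays bounded on pairs of points with coordinates in disjoint neighborhoods in $\bar\Omega$; and (b) the boundary uniqueness principle invoked in the final step. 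Of these, (a) is the main obstacle, since it is the off-diagonal Bergman kernel estimate, not merely the diagonal blow-up in Proposition~\ref{kernel}, that drives the cross terms to zero.
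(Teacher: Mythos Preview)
Your overall strategy is the same as the paper's: represent $S$ locally as $\sum_i\phi_i(z)g(\rho_i(z))$ via Theorem~\ref{key} and Proposition~\ref{notation}(1), feed in Bergman kernels, split into diagonal and off-diagonal terms, and use compactness plus off-diagonal kernel smallness and Proposition~\ref{kernel} to force the boundary values of each $\phi_i$ to vanish, then invoke boundary uniqueness. So the architecture is right.

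The genuine gap is in your cross-term estimate. You write the $i\neq k$ term as
\[
\phi_i(y_n)\cdot\overline{\langle k_{\rho_i(y_n)},k_{u_n}\rangle}\cdot\frac{\|K_{\rho_i(y_n)}\|}{\|K_{y_n}\|}
\]
and assert that ``the other factors are bounded''. The ratio $\|K_{\rho_i(y_n)}\|/\|K_{y_n}\|$ is indeed bounded by Proposition~\ref{kernel}, but there is no a~priori reason for $\phi_i$ to be bounded on $Y$: $\Phi$ is only holomorphic on $W_f$, not up to its closure, and the points $(y_n,\rho_i(y_n))$ run to $\partial W_f$. Without a growth bound on $\phi_i$, the product of a term going to $0$ with $\phi_i(y_n)$ need not go to $0$. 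This is precisely the step the paper does \emph{not} skip: before analysing the Berezin-type expression, it uses the auxiliary polynomials $h_i^w(z)=\prod_{j\neq i}(z_k-\rho_j(w)_k)$ from the proof of Theorem~\ref{key} to single out one branch at a time and obtain the quantitative bound
\[
|\phi_i(w)|\le L\,\|K_w\|,\qquad w\in Y,
\]
from $|\langle S(h_i^w),K_w\rangle|\le\|S\|\,\|h_i^w\|\,\|K_w\|$ together with the uniform lower bound $|h_i^w(\rho_i(w))|\ge\epsilon^{m-1}$. With this in hand, and the off-diagonal bound $|K(\rho_i(w),\rho_j(w))|\le N$ for $i\ne j$, the cross term is dominated by $LN/\|K_{\rho_i(w)}\|\to 0$, which is exactly what you want. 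So your ingredient~(a) is necessary but not sufficient; you also need an a~priori growth estimate on the $\phi_i$, and the clean way to get it is the $h_i^w$ trick.
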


\begin{proof}

We will use notations from Proposition \ref{notation}. It follows from Theorem \ref{key} and its proof that there are holomorphic functions $\phi_i\in A(Y)$ such that 
$$S(g(w))=\sum_i \phi(w)g(\rho_i(w)), w\in Y$$

Next we will look at the two variable Berezin transform of $S.$ 
 Since $S$ is a compact operator and since by Lemma \ref{Sonm} $\frac{K_w}{||K_w||}\to 0$ weakly as $w\to \partial{\Omega}$, we have
$$\lim_{w_1, w_2\to \partial\Omega}\frac{\langle S(K_{w_1}), K_{w_2}\rangle}{\|K_{w_1}\|\|K_{w_2}\|}=0.$$

Recall $\epsilon>0,$ and functions $h_i^w(z)=\prod_{j\neq i}h_{ij}(z, w),$
from the proof of Theorem \ref{key}: here $h_{ij}(z, w)=(z_k-\rho_j(w)_k)$ is linear in $z$ such that
$$|h_{ij}(z, w)|\geq \epsilon, z\in \rho_i(Y), w\in Y, i\neq j.$$
Since $\Omega$ is bounded, there exists $M>0$ such that $\|h_i^w(z)\|<M$ for all $i, z\in \Omega, w\in Y.$ 
Thus, for all $w\in Y.$
$$|\langle S(h_i^w), K_w\rangle|\leq M\|S\|\|K_w\|$$ 
Then, $$\langle S(h_i^w), K_w\rangle=\sum_j\phi_j(w)h_i^w(\rho_i(w))=\phi_i(w)\prod_{j\neq i}h_{ji}(\rho_j(w), \rho_i(w)).$$
By our assumption $$\prod_{j\neq i}|h_{ji}(\rho_j(w), \rho_i(w))|\geq \epsilon^{m-1}.$$
This implies that there is $N$ such that $|K_{\rho_i(w)}(\rho_j(w))|<N$ for all $i\neq j, w\in Y.$
Thus, there exists $L>0,$ such that $\phi_i(w)\leq L||K_w||$ for all $i, w\in Y.$

We have $$\langle S(K_{\rho_i(w)}), K_w\rangle=\sum_j\phi_j(w)K_{\rho_i(w)}(\rho_j(w)).$$
 So, for $i\neq j$ we have 
 $$\lim_{w\to \partial\Omega\cap \partial U}\frac{\phi_j(w)K_{\rho_i(w)}(\rho_j(w))}{\|K_w\|\|K_{\rho_i(w)}\|}=0.$$
Therefore, $$\lim_{w\to \partial\Omega\cap \partial Y}\frac{\phi_i(w)\|K_{\rho_i(w)}\|}{\|K_w\|}=0,$$
which by Proposition \ref{kernel} implies that $\lim_{w\to \partial\Omega\cap \partial Y}\phi_i(w)=0$ for all $i.$
This implies that $\psi_i=0$ for all $i$ by the Boundary uniqueness theorem [\cite{Ci}, page 289].

\end{proof}

\begin{lemma}\label{peloso}

Suppose that
$H_{\bar{z_i}}$ (the Hankel operator with symbol $\overline{z_i}$) is compact for all $i.$
Let $G=\lbrace g_1,\cdots, g_m\rbrace$  be an $m$-tuple of bounded
holomorphic functions on $\Omega$  such that the commutant of $T_G=\lbrace T_{g_i}, 1\leq i\leq m\rbrace$
 contains no nonzero compact operators.
If an operator $S$ in the Toeplitz algebra of $\Omega$ commutes
with $T_G,$ then $S$ is a multiplication operator by a
 bounded holomorphic function on $\Omega.$

\end{lemma}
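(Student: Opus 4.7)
The plan is to show $[S, T_{z_i}] = 0$ for every coordinate function $z_i$; once this is established, $S$ commutes with $M_{z_i}$ for all $i$, and a reproducing-kernel argument then identifies $S$ as $M_h$ for some $h \in H^{\infty}(\Omega)$.

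The first step is to show that $T_{z_i}$ is essentially central in the Toeplitz algebra. From the identity $T_f T_g - T_{fg} = -H_{\bar f}^{*} H_g$ together with $H_{z_i} = 0$ (since $z_i$ is holomorphic), one gets
\[[T_{z_i}, T_g] \;=\; -H_{\bar z_i}^{*}\,H_g,\]
which is compact for every $g \in L^{\infty}(\Omega)$ by the hypothesis on $H_{\bar z_i}$. Since the compact operators form a closed two-sided ideal in $B(A^{2}(\Omega))$, the derivation $[T_{z_i}, \cdot]$ sends every finite linear combination of finite products of Toeplitz operators into the compacts, and this extends to all norm limits. Hence $[T_{z_i}, S]$ is compact for every $S$ in the Toeplitz algebra of $\Omega$. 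By the Jacobi identity,
\[[[S, T_{z_i}], T_{g_j}] \;=\; [[S, T_{g_j}], T_{z_i}] \;+\; [S, [T_{z_i}, T_{g_j}]],\]
and both summands vanish: the first by the hypothesis $[S, T_{g_j}] = 0$, and the second because $T_{z_i} = M_{z_i}$ and $T_{g_j} = M_{g_j}$ are multiplications by holomorphic functions and hence commute. Thus $[S, T_{z_i}]$ is a compact operator in the commutant of $T_G$, and the standing hypothesis on $G$ forces $[S, T_{z_i}] = 0$.

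Given $[S, M_{z_i}] = 0$ for all $i$, the adjoint satisfies $[S^{*}, M_{z_i}^{*}] = 0$. Since $M_{z_i}^{*} K_z = \overline{z_i}\,K_z$ for every $z \in \Omega$, the vector $S^{*}K_z$ lies in the joint eigenspace
\[\bigcap_{i=1}^{n} \ker(M_{z_i}^{*} - \overline{z_i}\,I) \;=\; \Bigl(\sum_{i=1}^{n} (w_i - z_i)\,A^{2}(\Omega)\Bigr)^{\perp}.\]
Applying Lemma \ref{Nullst} to the biholomorphism $w \mapsto w - z$, whose Jacobian determinant is $1$, this joint eigenspace equals $\mathbb{C}K_z$. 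Hence $S^{*}K_z = \overline{h(z)}\,K_z$ for some scalar $h(z)$, and then $(Sf)(z) = \langle f, S^{*}K_z\rangle = h(z)\,f(z)$ for every $f \in A^{2}(\Omega)$. Taking $f = 1$ gives $h = S(1) \in A^{2}(\Omega)$, so $h$ is holomorphic; and the estimate $|h(z)|\,\|K_z\| = \|S^{*}K_z\| \leq \|S\|\,\|K_z\|$ yields $h \in H^{\infty}(\Omega)$.

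I do not anticipate a substantial obstacle; the only step demanding genuine care is the first, namely the passage from ``$[T_{z_i}, T_g]$ is compact for every $g \in L^{\infty}$'' to ``$[T_{z_i}, S]$ is compact for every $S$ in the norm closure.'' This relies on the ideal property of the compacts under norm limits, and its correctness is what promotes the Hankel hypothesis into a structural statement about the whole Toeplitz algebra.
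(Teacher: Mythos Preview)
Your proof is correct and follows the paper's argument almost verbatim: the identity $[T_{z_i},T_g]=H_{\bar z_i}^{*}H_g$, the passage to compactness of $[T_{z_i},S]$ for all $S$ in the Toeplitz algebra via the ideal property, and the conclusion $[T_{z_i},S]=0$ from the hypothesis on the commutant of $T_G$ are exactly the paper's steps. Your Jacobi-identity display is simply an explicit unpacking of what the paper records in one sentence (using that $T_{z_i}$ and $T_{g_j}$ already commute, so $[T_{z_i},S]$ lies in the commutant of $T_G$).

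The only genuine difference is the endgame: the paper, having obtained $[T_{z_i},S]=0$ for all $i$, immediately cites \cite{SSU} to conclude $S=T_h$ with $h\in H^{\infty}(\Omega)$, whereas you give a self-contained reproducing-kernel argument, invoking Lemma~\ref{Nullst} (for the translation map $w\mapsto w-z$) to identify the joint eigenspace $\bigcap_i\ker(M_{z_i}^{*}-\bar z_i)$ with $\mathbb{C}K_z$ and then read off $h=S(1)\in H^{\infty}(\Omega)$. Both routes are valid; yours has the advantage of staying internal to the paper's toolkit, while the citation to \cite{SSU} is shorter and covers the point without any appeal to Assumption~\ref{sheep}.
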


\begin{proof}
Recall that for any $g\in L^{\infty}(\Omega)$, we have  
$[T_{z_i}, T_g]=H^{*}_{\bar{z_i}}H_g.$ This equality combined with compactness of
$H_{\bar{z_i}}$ implies that for any element $S$ of the Toeplitz algebra of $\Omega,$
operators $[T_{z_i}, S], 1\leq i\leq n$ are compact. 
If in addition $S$ commutes with $T_G,$ then $[T_{z_i}, S], 1\leq i\leq n$ are compact operators in the
commutant of $T_G.$ Thus  $[T_{z_i}, S]=0, 1\leq i\leq n.$
Now by \cite{SSU} $S=T_h$ for some $h\in H^{\infty}(\Omega)$. 

\end{proof}

It is well-known that smooth strongly pseudoconvex domains satisfy the assumption in Corollary \ref{peloso}
(follows immediately from [\cite{Pe}, Theorem 1.2]). Hence as a consequence of Lemma \ref{peloso} and 
Theorem \ref{compact} we obtain the following

\begin{theorem}\label{strong}
Let $\Omega\subset \mathbb{C}^n$ be a bounded smooth strongly pseudoconvex domain.
Let $f=(f_1,\cdots, f_n):\overline{\Omega}\to \mathbb{C}^n$ be a holomorphic
mapping on a neighbourhood of $\overline{\Omega}$ with a nontrivial Jacobian determinant. 
If $S$ is an element of the Toeplitz algebra of $\Omega$ which commutes with
 $T_{f_i}, i=1,\cdots, n,$ then  $S$ is a multiplication operator by
a bounded holomorphic function on $\Omega$.

\end{theorem}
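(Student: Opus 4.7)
The plan is to assemble the theorem as a direct corollary of the two substantial pieces already established in the paper, namely Lemma \ref{peloso} and Theorem \ref{compact}. To apply Lemma \ref{peloso} with $G = f = (f_1,\ldots,f_n)$, I need to verify two hypotheses: first, that the Hankel operators $H_{\bar{z_i}}$ are compact on $A^2(\Omega)$ for all $i$; second, that the commutant of $T_f$ in $B(A^2(\Omega))$ contains no nonzero compact operators.

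For the first hypothesis, I would invoke Peloso's result [\cite{Pe}, Theorem 1.2], which guarantees that on a smooth bounded strongly pseudoconvex domain the Hankel operator $H_{\bar{\phi}}$ is compact whenever $\phi$ is smooth up to the boundary (and in particular for $\phi = z_i$, which is holomorphic on a neighbourhood of $\overline{\Omega}$). For the second hypothesis, I would first note that strongly pseudoconvex domains satisfy Assumption \ref{sheep} by the lemma established just after Assumption \ref{sheep}, so Theorem \ref{compact} applies: any compact operator commuting with $T_f$ must be zero. Here it is essential that $f$ has nontrivial Jacobian determinant, which is exactly the hypothesis used throughout Sections 3--4 to set up $W_f$ and the Berezin transform argument of Theorem \ref{compact}.

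Once both hypotheses of Lemma \ref{peloso} are verified, the conclusion is immediate: any $S$ in the Toeplitz algebra of $\Omega$ commuting with all $T_{f_i}$ must be $T_h$ for some $h \in H^\infty(\Omega)$, which is by definition a multiplication operator by a bounded holomorphic function.

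I do not expect any serious obstacle in this argument, since both inputs have been laid out earlier; the only subtlety worth double-checking is that Peloso's compactness result applies to the specific symbols $\bar{z_i}$ and delivers compactness of $H_{\bar{z_i}}$ in the exact form needed by Lemma \ref{peloso} (i.e.\ so that the commutator identity $[T_{z_i}, T_g] = H^*_{\bar{z_i}} H_g$ propagates compactness to every commutator $[T_{z_i}, S]$ for $S$ in the Toeplitz algebra). Given that, the theorem falls out as stated.
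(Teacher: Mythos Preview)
Your proposal is correct and matches the paper's own argument essentially verbatim: the paper states Theorem \ref{strong} as an immediate consequence of Lemma \ref{peloso} and Theorem \ref{compact}, invoking [\cite{Pe}, Theorem 1.2] to verify compactness of $H_{\bar{z_i}}$ on strongly pseudoconvex domains. Your added remark that Assumption \ref{sheep} holds for such domains (so that Theorem \ref{compact} is applicable) is exactly the check needed, and the paper supplies it earlier.
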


Recall that in general, given an $(n-1)$-tuple of holomorphic functions $f_1,\cdots, f_{n-1}$ on $\Omega,$
 commutants of $T_{f_i},1\leq i\leq n-1$ will contain nontrivial compact operators [\cite{Le}, Proposition 2.4].
However, it is possible that for a specific $f\in A^{\infty}(\Omega),$ no nontrivial compact operator commutes with
$T_f$ [\cite{Le}, Theorem 1.1].

%The following proposition is well-known, we include it's proof
%for completeness.
%\footnote{ We would like to thank T. Le for the proof.}

%\begin{prop}
%Let $\Omega\subset \mathbb{C}^n$ be a bounded pseudoconvex domain.
% Then
%the Toeplitz algebra of $\Omega$ contains all compact operators in $B(A^2(\Omega)).$

%\end{prop}

%\begin{proof}

%Denote by $T(\Omega)$ the Toeplitz algebra of $\Omega.$ Then $A^2(\Omega)$ is
%an irreducible representation of $T(\Omega).$ Indeed, suffices
%to show that if a bounded operator $S:A^2(\Omega)\to A^2(\Omega)$
%commutes with  every element in $T(\Omega),$ then $S$ must be constant. Indeed,
%since $[S, T_{z_i}]=0, 1\leq i\leq n,$ it follows (\cite{SSU}) that $S$ must be a multiplication
%operator by some  bounded holomorphic function $f\in H^{\infty}(\Omega).$
%But then its adjoint $S^{*}=T_{\bar{f}}$ must also be a multiplication operator
%by a holomorphic function, therefore $f$ must be a constant.

%Now since $T(\Omega)$ contains a
%nonzero compact operator (for example $T_h$ is compact for any compactly supported
%$h\in L^{\infty}(\Omega))$,  and since $A^2(\Omega)$ is irreducible as module over $T(\Omega),$
%$T(\Omega)$ must contain all compact operators in $B(A^2(\Omega))$  
%by a well-known result [\cite{Do},  Theorem 5.39.]. 
%\end{proof}

\section{Convolution algebras}

Let $f:X\to Y$ be a finite-to-one local homeomorphism
of topological spaces. Recall the standard notation
$$X\times_fX=\lbrace (z, w)\in X\times X, f(z)=f(w)\rbrace.$$ We have two projections
$$p_1, p_2: X\times_fX\to X, p_1(z, w)=z, p_2(z, w)=w.$$ 
Also recall that for a subset $Z\subset X\times_fX$ we have 
$$Z_{p_1}\times_{p_2}Z=\lbrace(z, w)\in X\times_fX|\exists t\in X s.t. (z, t)\in Z, (t, w)\in Z\rbrace.$$

Let $W$ be a symmetric subset of $X\times_fX$: if $(x_1, x_2)\in W$ then $(x_2, x_1)\in W$ such that $p_1|_{W}:W\to X$ is a covering
and $W_{p_1}\times_{p_2}W=W.$
Recall that in this setting $(\mathbb{C}[W]$
($\mathbb{C}$-valued continuous functions on $W$) is an associative algebra under  the 
convolution product $\star$:
$$\phi\star \psi(z, w)=\sum_{(z, t), (t, w)\in W}\phi(z, t)\psi(t, w), \phi, \psi\in \mathbb{C}[W].$$
Given $g\in \mathbb{C}[W],$ one defines the corresponding
weighted composition operator $S_g:\mathbb{C}[X]\to \mathbb{C}[X]$ as follows
$$S_g(\phi)(x)=\sum_{(x, w)\in W} g(x, w)\phi(w), \phi\in \mathbb{C}[X], x\in X.$$ This way $\mathbb{C}[X]$ becomes
a left $(\mathbb{C}[W], \star)$-module. It is straightforward to check that $S_g$ commutes with $T_f,$
where $T_f:\mathbb{C}[X]\to \mathbb{C}[X]$ is the multiplication operator
by $f.$

If in addition $X, Y, W$ are complex manifolds and $f$ is locally biholomorphic mapping, then $A(W)$ (the space of all holomorphic
functions on $W$) is a subalgebra of $(\mathbb{C}[W], \star)$. 
\begin{defin}\label{convolution}
Let $f:X\to Y, W\subset X\times_fX$ be as above.
We will denote by $\mathcal{A}(W)$ the algebra of all locally constant functions on $W$ under 
the convolution product.
If $f:X\to Y$ is a finite covering, then we will denote $\mathcal{A}(X\times_fX)$ by $\mathcal{A}(X, f).$
\end{defin}

If $f:X\to Y$ is a finite covering, and $X, Y$ are path connected, locally simply connected spaces,
 then $\mathcal{A}(X, f)$ 
can be naturally identified with the Hecke algebra 
of all bi -$\pi_1(X)$-invariant $\mathbb{C}$-valued functions on $\pi_1(Y)$ under the convolution product.
In particular, if $f:X\to Y$ is a normal covering, then $\mathcal{A}(X, f)$  is isomorphic to the group
algebra $\mathbb{C}[\pi_1(Y)/f_{*}\pi_1(X)].$

Let $Y'\subset Y.$ Then $f:X'=f^{-1}(Y')\to Y'$ is a covering map, and we have an algebra
homomorphism $\mathcal{A}(X, f)\to \mathcal{A}(X', f')$ given by the restriction of elements of $\mathcal{A}(X, f)$ on $X'\times_fX'.$

Let $f:M\to N$ be a finite covering map of connected real manifolds with boundaries. Then we get restrictions
of $f$ which are again coverings 
$f:M\setminus \partial(M)\to N\setminus \partial(N), f:\partial(M)\to \partial(N).$

In this setting we have the following simple
\begin{lemma}\label{abelian}
Suppose that $\partial(M)$(hence $\partial(N)$) is connected and $\pi_1(\partial(N))$ is Abelian. Then
$\mathcal{A}(M, f)=\mathcal{A}(M\setminus\partial(M), f)$ is commutative.

\end{lemma}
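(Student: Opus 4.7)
The plan is to prove the two assertions separately: first establish the identification $\mathcal{A}(M,f)=\mathcal{A}(M\setminus\partial M,f)$, and then derive commutativity by restricting to the boundary covering. For the identification I would use that $f:M\to N$, being a finite covering of manifolds with boundary, preserves boundaries, so $f^{-1}(\partial N)=\partial M$. This makes $M\times_f M$ itself a manifold with corners whose interior equals $(M\setminus\partial M)\times_f(M\setminus\partial M)$ and whose boundary equals $\partial M\times_f\partial M$. Since the interior of a connected manifold with boundary is connected and dense in it, passage to the interior sets up a bijection between the connected components of $M\times_f M$ and those of $(M\setminus\partial M)\times_f(M\setminus\partial M)$; as both convolution algebras consist of locally constant $\mathbb{C}$-valued functions, this bijection is an algebra isomorphism.

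For commutativity, I would consider the restriction map
$$r:\mathcal{A}(M,f)\longrightarrow\mathcal{A}(\partial M,f),\qquad \phi\mapsto\phi|_{\partial M\times_f\partial M}.$$
It is an algebra homomorphism: for $(z,w)\in\partial M\times_f\partial M$, in the convolution sum $(\phi\star\psi)(z,w)=\sum_{t\in f^{-1}(f(z))}\phi(z,t)\psi(t,w)$ every $t$ satisfies $f(t)=f(z)\in\partial N$, hence lies in $f^{-1}(\partial N)=\partial M$, so the $M$- and $\partial M$-convolutions agree at boundary pairs. To show $r$ is injective I would use that $p_1:M\times_f M\to M$ is itself a finite covering map, so the $p_1$-image of each connected component $W$ of $M\times_f M$ is open, closed, and nonempty in the connected space $M$, hence all of $M$; since $\partial M\neq\emptyset$, each such $W$ meets $\partial M\times_f\partial M$. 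A locally constant function on $M\times_f M$ vanishing on $\partial M\times_f\partial M$ must therefore vanish on every component, so $r$ is injective.

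To finish, I would invoke the Hecke-algebra interpretation recalled just before the statement: since $\partial M\to\partial N$ is a finite covering of connected, locally simply connected spaces, $\mathcal{A}(\partial M,f)$ is the Hecke algebra of bi-$f_{*}\pi_1(\partial M)$-invariant functions on $\pi_1(\partial N)$. Because $\pi_1(\partial N)$ is abelian, the subgroup $f_{*}\pi_1(\partial M)$ is automatically normal, the covering is normal, and $\mathcal{A}(\partial M,f)\cong\mathbb{C}[\pi_1(\partial N)/f_{*}\pi_1(\partial M)]$ is the group algebra of an abelian group, hence commutative. Since $r$ embeds $\mathcal{A}(M,f)$ into a commutative algebra, $\mathcal{A}(M,f)$ is itself commutative. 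The main obstacle I anticipate is the injectivity step, which genuinely requires the finite-covering structure of $p_1$ (not merely its being a local homeomorphism) so that every connected component of $M\times_f M$ surjects onto $M$ and thereby reaches the boundary.
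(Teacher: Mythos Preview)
Your proof is correct and follows essentially the same route as the paper's: both arguments establish an injective restriction homomorphism $\mathcal{A}(M,f)\hookrightarrow\mathcal{A}(\partial M,f)$ by showing that every connected component of $M\times_f M$ meets $\partial M\times_f\partial M$ (you phrase this via the image of $p_1$ being open, closed, and nonempty in the connected space $M$; the paper phrases it by noting that $p_1$ restricted to each component is a covering of $M$, hence has nonempty boundary), and then identify the target with the commutative group algebra $\mathbb{C}[\pi_1(\partial N)/f_*\pi_1(\partial M)]$ using the abelian hypothesis. One small remark: since $p_1:M\times_f M\to M$ is itself a covering, $M\times_f M$ is locally homeomorphic to $M$ and is therefore a manifold with boundary rather than with corners; this does not affect your argument, as the only fact you use is that the interior of each component is connected and dense.
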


\begin{proof}

We have $\partial(M\times_f M)=\partial(M)\times_f \partial(M).$ Let $X'$ be
a connected component of $M\times_f M.$ Then $p_1:X'\to M$ is a covering map, hence
$\partial(X')$ is a nonempty component of $\partial(M)\times_f \partial(M).$ Hence,
if $\phi \in \mathcal{A}(M, f)$ is such that $\phi_|{X'}\neq 0$ then the image
of $\phi$ in $\mathcal{A}(\partial(N), f)$ is nonzero on $\partial(X').$ So, $\mathcal{A}(M, f)$ embeds
into $\mathcal{A}(\partial(M), f).$ Since $X'\setminus\partial(X')=X'\setminus (\partial(M)\times_f \partial(M))$
is connected, we obtain that $\mathcal{A}(M, f)=\mathcal{A}(M\setminus \partial(M), f).$
Since $\pi_1(\partial(N))$ is Abelian, $\partial M\to \partial N$ is a normal covering.
Therefore $\mathcal{A}(\partial(M), f)=\mathbb{C}[\pi_1\partial(N)/\pi_1\partial(M)].$ Hence
$\mathcal{A}(\partial(M), f)$ is commutative. This implies that $\mathcal{A}(M\setminus \partial(M), f)$ is also commutative.

\end{proof}

%In the above setting, we will denote by $H^i(X, f)$ the subalgebra of $H(X, f)$ consisting
%of locally constant functions on $X\times_{f}X$ supported on $X_i=\lbrace x\in X: |f^{-1}(f(x))|=i.$

%\begin{prop}\label{comm}

%Let $\Omega\subset \mathbb{C}^n$ be a bounded preudoconvex domain, and
%let $F=(f_1,\cdots , f_n):\bar{\Omega}\to \mathbb{C}^n$ be a holomorphic
%mapping in a neighbourhood of $\overline{\Omega}.$ Put 
%$\Omega'=\Omega\setminus (F^{-1}(F(\partial{\Omega}))\cup Z)$, where $Z$ is
%the preimage of the image of the singular locus of Jacobian of $F.$ Let
%$S:A^2(\Omega)\to A^2(\Omega)$ be a bounded linear operator that commutes
%with $T_F.$ Then there is a holomorphic function  $g\in A(\Omega'\times_F\Omega')$ such that
%$S(\phi)|_{\Omega'}=S_g(\phi).$ 
%\end{prop}
%\begin{proof}

%Following the proof of \ref{commute}, there exists a function
%$g$ on $\Omega'\times_F\Omega'$ such that for any $w\in \Omega'$ one has
%$S(f)(w)=\sum_{t\in F^{-1}(F(w))} g(w, t)f(t).$ On the other hand, since
%$F:\Omega'\to \Omega'$ is a covering map, assumptions of \ref{commute}
%are satisfied.

%\end{proof}

\section{Commutants of  $\lbrace T_f, T_f^{*}\rbrace$}

The following assumption on the mapping $f$ will play a key role.

\begin{assumption}\label{dense}

Assume that $Z=f^{-1}(f(V(J_f)))$ is not dense in the Zariski topology of $\Omega:$
There exists a nonzero  $g\in A^{\infty}(\Omega)$ such that $g(Z)=0.$
\end{assumption}

This assumption is satisfied if $f$ is a rational mapping,
 if $n=1$, or
$f:\Omega\to f(\Omega)$ is a proper mapping \cite{Ru}.

The following is the main result of the paper.

\begin{theorem}\label{reducing} Assume that Assumption \ref{sheep} holds for $\Omega.$
Then under the notations of Theorem \ref{key}, 
the algebra of  commutants of $\lbrace T_f, T_f^{*}\rbrace$ is isomorphic to a subalgebra of
$\mathcal{A}(W_f),$-the algebra of  locally constant
functions on $W$ under convolution (Definition \ref{convolution}). If in addition mapping $f$ satisfies Assumption \ref{dense}, then
these algebras are isomorphic.

\end{theorem}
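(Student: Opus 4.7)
The strategy is to extract two holomorphic symbols for $S$, one from commutation with $T_f$ and another from commutation with $T_f^*$, and show that their compatibility forces local constancy; the inverse construction then produces all such operators under Assumption \ref{dense}.

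\textbf{Extracting two symbols.} Given $S$ in the commutant of $\{T_f, T_f^*\}$, Theorem \ref{key} applied to $S$ produces $\Phi \in A(W_f)$ with $S(g)(z) = \sum_{(z,w) \in W_f} \Phi(z,w) g(w)$. Since $S$ commuting with $T_f^*$ is equivalent to $S^*$ commuting with $T_f$, Theorem \ref{key} applied to $S^*$ yields $\Psi \in A(W_f)$ with the analogous formula. Computing $\langle SK_a, K_z\rangle$ two ways, once directly as $SK_a(z)$ via $\Phi$, and once as $\overline{S^*K_z(a)}$ via $\Psi$, and using $K_z(w) = \overline{K_w(z)}$, produces the identity
\[
\sum_{(z,t)\in W_f}\Phi(z,t) K_a(t) = \sum_{(a,w)\in W_f}\overline{\Psi(a,w)} K_w(z)
\]
valid for $a, z \in \Omega''$.

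\textbf{Forcing local constancy.} In the chart of Prop \ref{notation}(2), with biholomorphisms $\rho_i: U \to U$ (which form a group since $W_f$ is symmetric and satisfies $W_f \times_f W_f = W_f$ by Lemma \ref{W}, so $\rho_j^{-1} = \rho_{\sigma(j)}$ for a permutation $\sigma$), the identity above reads
\[
\sum_i \phi_i(z) K_a(\rho_i(z)) = \sum_j \overline{\psi_j(a)} K_{\rho_j(a)}(z),
\]
where $\phi_i(z) = \Phi(z,\rho_i(z))$ and $\psi_j(a) = \Psi(a,\rho_j(a))$. Using the boundary disjointness $\overline{\rho_i(Y)} \cap \overline{\rho_j(Y)} = \emptyset$ of Prop \ref{notation}(1) together with the kernel asymptotics of Prop \ref{kernel} and Lemma \ref{Sonm}, one isolates the contribution of each branch as $a \to \partial\Omega \cap \partial(\rho_j(Y))$ (in the spirit of the normalized-kernel analysis in the proof of Theorem \ref{compact}), and deduces a pointwise relation expressing $\psi_j(a)$ as an anti-holomorphic quantity in terms of $\phi_{\sigma(j)} \circ \rho_j$. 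Since $\psi_j$ is holomorphic while the right-hand side of this relation is anti-holomorphic, each $\phi_i$ must be both holomorphic and anti-holomorphic on its connected domain, hence locally constant. Therefore $\Phi \in \mathcal{A}(W_f)$.

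\textbf{Algebra structure and surjectivity.} The map $S \mapsto \Phi$ is an algebra homomorphism: if $S_1, S_2$ have symbols $\Phi_1, \Phi_2$, then using $W_f \times_f W_f = W_f$, the composition $S_1 S_2$ has symbol $\Phi_1 \star \Phi_2$. Injectivity is immediate from the formula, since $\Phi = 0$ forces $S(g)(z) = 0$ for all $z \in \Omega'$, a dense subset of $\Omega$. For surjectivity under Assumption \ref{dense}, given $\Phi \in \mathcal{A}(W_f)$, define $S_\Phi(g)(z) = \sum_{(z,w) \in W_f} \Phi(z,w) g(w)$. A nonzero $g_0 \in A^\infty(\Omega)$ vanishing on $Z$, supplied by Assumption \ref{dense}, makes $g_0 A^\infty(\Omega)$ a dense subspace of $A^2(\Omega)$ (by Lemma \ref{Nullst} and Assumption \ref{sheep}) on which $S_\Phi$ is well-defined. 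Locally, $S_\Phi$ decomposes as a finite sum of weighted composition operators with bounded weights and with $\rho_i$ extending biholomorphically across the boundary pieces (Prop \ref{notation}(1)); a change-of-variables estimate together with Prop \ref{kernel} yields $L^2$-boundedness, allowing extension to $A^2(\Omega)$. Direct computation confirms commutation with both $T_f$ and $T_f^*$. The main obstacle throughout is the branch-separation step in the second paragraph: cleanly disentangling the kernel identity near each boundary piece requires careful tracking of normalized kernels, and the boundedness in surjectivity hinges on Assumption \ref{dense} to absorb the potential singularities of $S_\Phi$ at the zero set $Z$.
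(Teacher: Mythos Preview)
Your overall architecture---extract two holomorphic symbols from Theorem \ref{key} applied to $S$ and $S^*$, then show their compatibility forces local constancy---matches the paper's, but the crucial middle step is handled very differently, and your version has a genuine gap.

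\textbf{The local-constancy step.} Your ``branch separation via boundary kernel asymptotics'' is not an argument as written. In the identity
\[
\sum_i \phi_i(z)\,K_a(\rho_i(z)) \;=\; \sum_j \overline{\psi_j(a)}\,K_{\rho_j(a)}(z),
\]
the left-hand branches are indexed by the $z$-fiber and the right-hand ones by the $a$-fiber; letting $a$ approach $\partial\Omega$ does not single out a term on the left, and the normalized-kernel trick from Theorem \ref{compact} relied essentially on \emph{compactness} of the operator, which you do not have here. The paper avoids boundary analysis entirely: from $\langle P(T_f)S K_\lambda, Q(T_f)K_\mu\rangle=\langle P(T_f)K_\lambda, Q(T_f)S^*K_\mu\rangle$ and Stone--Weierstrass on $C(\overline{f(\Omega)})$ one gets, for \emph{every} interior $z\in U$, the matrix relation
\[
\langle \Phi(z)\,J_\rho K_\lambda(\rho(z)),\,J_\rho K_\mu(\rho(z))\rangle
=\langle J_\rho K_\lambda(\rho(z)),\,\Psi(z)\,J_\rho K_\mu(\rho(z))\rangle,
\]
and since the vectors $J_\rho K_\lambda(\rho(z))$ span $\mathbb C^m$ (an easy lemma), this forces $\Psi(z)=\Phi(z)^*$. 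Holomorphicity of both $\Phi$ and $\Psi$ then gives local constancy immediately. This is the key idea you are missing.

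\textbf{The Jacobian twist.} Your conclusion ``$\Phi\in\mathcal A(W_f)$'' is not literally correct: the function coming out of Theorem \ref{key} is $\Phi(z,w)=c(z,w)\,J_f(z)/J_f(w)$, and it is $c$, not $\Phi$, that is locally constant. The paper builds the isomorphism via $\iota_c(g)(z)=\sum_{(z,w)\in W_f} c(z,w)\tfrac{J_f(z)}{J_f(w)}g(w)$; the Jacobian factors telescope under composition (so $\iota$ is multiplicative) and are exactly what make the adjoint computation $(\iota_c)^*=\iota_{c^*}$ work via change of variables. Without them your surjectivity direction cannot verify commutation with $T_f^*$.

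\textbf{Surjectivity.} The paper does not use a dense-subspace argument. Under Assumption \ref{dense} it invokes Bell's removable singularity theorem to get $A^2(\Omega')=A^2(\Omega)$, and then a direct change-of-variables bound $\|\iota_c(\phi)\|_{A^2(\Omega')}\le m\|c\|_{L^\infty(W_f)}\|\phi\|_{A^2(\Omega')}$ gives boundedness in one line; the adjoint identity then shows $\iota_c$ lies in the commutant of $\{T_f,T_f^*\}$.
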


\begin{proof}
Recall that $p_1|W_f:W_f\to \Omega'$ is a covering. From now on we will denote $p_1|W_f$ by
$p_1$ for simplicity. Similarly, $p_2|_{W_f}$ will be abbreviated to $p_2.$
We will define an algebra homomorphism 
$$\iota: A(W_f)\to Hom_{\mathbb{C}}(A(\Omega'), A(\Omega'))$$ as follows.
Let $c\in A(W_f), \phi\in A(\Omega').$ We will define a holomorphic
function $\iota_c(\phi)\in A(\Omega')$ in the following way. 
We put 
$$\iota_c(\phi)(z)=\sum_{(z, w)\in W}c(z, w)\frac{J_f(z)}{J_f(w)}\phi(w), z\in \Omega'.$$
Clearly $\iota_c(\phi)\in A(\Omega').$  
It is straightforward to check that $\iota$ is an algebra homomorphism.
To define $\iota_c(\phi)$ more explicitly we will use notations from Proposition $\ref{notation}$
Recall that by the chain rule  $J_{\rho_i}(z)=\frac{J_F(z)}{J_F(\rho_i(z))}.$
Therefore  
$$\iota_c(\phi)(z)=\sum_ic(z, \rho_{i}(z))J_{\rho_i}(z)\phi(\rho_{i}(z)), z\in \Omega'.$$

In what follows given $g\in A(\Omega'), z\in \Omega'$, by $J_{\rho}g(\rho(z))$ we will
denote the column vector   $(J_{\rho_i}(z)g(\rho_i(z)))_{1\leq i\leq m}$ in $\mathbb{C}^m.$
Now we follow very closely Guo-Huang [\cite{GH}, the proof of Proposition 3.4].

\begin{lemma}
Suppose that $S:A^2(\Omega)\to A^2(\Omega)$ commutes
with $T_f.$ Let $U\subset \Omega'$ be as above. Then there exists a holomorphic mapping
$\Phi:U\to gl_m(\mathbb{C})$ such that $J_{\rho}S(g)(\rho(z))=\Phi(z) J_{\rho}g(\rho)(z).$
%where $J_{\rho}g(\rho)(z)$ stands for the columnn vector $(J_{\rho_{j}}(z))g(\rho_{j}(z))_{1\leq j\leq m}\in \mathbb{C}^m.$

\end{lemma}

\begin{proof}

Using  Theorem \ref{key}, there exists $c\in A(W)$ such that
$$S(g)(z)=\sum_iJ_{\rho_i}(z)c(z, \rho_i(z))g(\rho_i(z))=\sum_{(z, w)\in W}c(z, w)\frac{J_f(z)}{J_f(w)}g(w).$$
Then the $i$-th coordinate of the vector$J_{\rho}S(g)(\rho(z))$ is 
$$\frac{J_f(z)}{J_f(w)}\sum_{\tau\in p_1^{-1}(w)} \frac{J_f(w)}{J_f(\tau)}c(w, \tau)g(\tau), w=\rho_i(z).$$
Let us put $\Phi(z)_{jk}=c(\rho_j(z), \rho_k(z)).$ 
Now it follows easily that $$J_{\rho}S(g)(\rho(z))=\Phi(z) J_{\rho}g(\rho)(z).$$
%It will be more convenient to treat $\Phi(z)$ as an element of $End(\oplus_{w\in p_1^{-1}(z)}\mathbb{C}w).$

%Therefore,
%$$ J(\rho_j(z))S(g)(\rho_j(z))=\sum_tJ(\rho_j)(z)g(\rho_t(\rho_j(z))J(\rho_t(\rho_j)))(z)J(\rho_j(z))^{-1}c(\rho_j(z), \rho_t\rho_j(z))=$$
%$$\sum_k c(\rho_j(z),\rho_k(z))g(\rho_k(z))J(\rho_k)(z)$$
%So, if we put $\Phi(z)_{jk}=c(\rho_{j}(z), \rho_k(z))$ then $J(\rho)S(g)(\rho(z))=\Phi(z) J(\rho)g(\rho)(z)$

\end{proof}

 Now assume that both $S, S^*$ 
commute with $T_f.$ Then by the above lemma there exist holomorphic mappings $\Phi, \Psi:U\to gl_m(\mathbb{C})$
such that
 $$J_{\rho}S(g)(\rho(z))=\Phi(z) J_{\rho}g(\rho)(z), J_{\rho}S^{*}(g)(\rho(z))=\Psi(z) J_{\rho}g(\rho)(z).$$
 Let $\lambda, \mu\in \Omega.$
 Given two polynomials $P, Q\in \mathbb{C}[x_1,\cdots, x_n]$ we have 
 $$\langle P(T_f)S(K_\lambda), Q(T_f)K_{\mu}\rangle=\langle P(T_f)(K_\lambda), Q(T_f)S^{*}(K_{\mu})\rangle.$$
So $$\int_{U}P\bar Q(f)(z)S(K_{\lambda})\bar{K_{\mu}}dV(z)=\int_{U}P\bar Q(F)(z)(K_{\lambda})\overline{S^{*}(K_{\mu})}dV(z)$$

Using the Stone-Weierstrass approximation, we see that for any $g\in C(\overline{F(\Omega}))$ one has
$$\int_{U}g(F(z))S(K_{\lambda})\bar{K_{\mu}}d_zV=\int_{U}g(F(z))(K_{\lambda})\overline{S^{*}(K_{\mu})}d_zV.$$
Thus the same equality holds for any $g\in L^{\infty}(\overline{F(\Omega)}).$ This implies using
change of variables that for all $ z\in U$
$$\sum_j|J_{\rho_{j}}(z)|^2S(K_{\lambda})(\rho_{j}(z))\overline{K_{\mu}(\rho_{j}(z))}=\sum_j |J_{\rho_{j}}(z))|^2K_{\lambda}(\rho_{j}(z))\overline{S^{*}(K_{\mu})(\rho_{j}(z)))},$$
the latter equality can be rewritten as 
$$\langle \Phi(z)J_{\rho}(z)K_{\lambda}(\rho(z)),  J_{\rho}(z)K_{\mu}(\rho(z))\rangle=\langle J_{\rho}(z)K_{\lambda}(\rho(z)), \Psi(z)J_{\rho}(z)K_{\mu}(\rho(z))\rangle,$$
where inner product is the standard one in $\mathbb{C}^m.$
Next we will use the following simple 

\begin{lemma}
For any $z\in \Omega'$ vectors $\lbrace J_{\rho}(z)K_{\lambda}(\rho(z))\rbrace_{\lambda\in \Omega}$ span $\mathbb{C}^m.$

\end{lemma}

\begin{proof}
Let vector $a=(a_i)_{i=1}^m\in \mathbb{C}^m$ be perpendicular to 
$\lbrace J_{\rho}(z)K_{\lambda}(\rho(z))\rbrace_{\lambda\in \Omega}.$
Thus for all $\lambda\in \Omega$
$$0=\sum_{i=1}^ma_i\overline{J_{\rho_i}(z)K_{\lambda}(\rho_i(z))}=\sum_{i=1}^ma_i\overline{J_{\rho_i}(z)}K_{\rho_i(z)}(\lambda).$$
Since $J_{\rho_i}(z)\neq 0$ and $K_{\rho_i(z)}, 1\leq i\leq m$ are linearly independent, it follows that
$a=0.$

\end{proof}
Now it follows from the above Lemma that
 $\Psi(z)$ is the adjoint of $\Phi(z).$ Since $\Phi, \Psi$ are holomorphic, it follows that  $\Phi, \Psi$ are locally constant functions on $U.$

Thus, we conclude that if $S:A^2(\Omega)\to A^2(\Omega)$ is a bounded linear operator such that
$S, S^{*}$ commute with $T_f$, then there exists a locally constant function $c$ on $W_f,$ such that
$S=\iota_c.$ This implies that the algebra of commutants of $\lbrace T_f, T_f^{*}\rbrace$ is
isomorphic to a subalgebra of $\mathcal{A}(W_f).$

Now let us assume that Assumption 2 is satisfied. Therefore, by Bell's result $A^2(\Omega')=A^2(\Omega)$ [\cite{Be}, Removable singularity theorem].
Next, suppose that $c\in H^{\infty}(W_f)$ is bounded holomorphic function on $W$ and $\phi\in A^2(\Omega).$ Then we claim
that $\iota_c(\phi)\in A^2(\Omega).$ Indeed, it follows from the change of variables that for all $1\leq i\leq m.$
$$\|c(z, \rho_i(z))J_{\rho_i}(z)\phi(\rho_i(z))\|_{L^2(U)}\leq \|c\|_{L^{\infty}(W)}\|\phi\|_{L^2(\Omega')}.$$
Therefore, $$\|\iota_c(\phi)\|_{A^2(\Omega')}\leq m \|c\|_{L^{\infty}(W)}\|\phi\|_{L^2(\Omega')}.$$
\noindent Hence $\iota_c(\phi)\in A^2(\Omega).$

Let $c\in \mathcal{A}(W).$
Put $c^{*}(z, w)=\overline{c(w, z)}, (z, w)\in W.$

Let $\phi, \psi\in A^2(\Omega).$ 
We have
$$\langle \iota_c(\phi), \psi\rangle_{A^2(\Omega)}=\sum_j \int_{U} c(z, \rho_{j}(z))J_{\rho_{j}}(z)\phi(\rho_{j}(z))\overline{\psi(z)}dV(z)=$$
$$\sum_j\int_{\rho_{j}(U)}\phi(w)c({\rho_{j}}^{-1}(w), w)\overline{J_{{\rho_{j}}^{-1}}}(w)\overline{\psi({\rho_{j}}^{-1}(w))}dV(w),$$

the latter is $\langle \phi, \iota_{c^*}(\psi)\rangle_{A^2(\Omega)}.$
Thus, we have shown that for any \\$c\in \mathcal{A}(W), \iota_c: A^2(\Omega)\to A^2(\Omega)$ is a bounded linear
operator commuting with $T_f.$ Moreover $(\iota_c)^{*}=\iota_{c^{*}}.$ This concludes the
proof.

\end{proof}

%\begin{remark}

%\end{remark}

As a consequence, we reprove the following theorem of Douglas, Putinar and Wang  [\cite{DPW}, Theorem 2.3].

\begin{theorem}

Let $f\in A^{\infty}(D)$ be a finite Blaschke product on the unit
 disc $D.$ Then the algebra of commutants of $\lbrace T_f, T_f^{*}\rbrace$
is isomorphic to $\mathbb{C}\underbrace{\oplus\cdots\oplus}_q \mathbb{C},$ 
where $q$ equals  the number of irreducible
components of $D'\times_{f} D'.$ 

\end{theorem}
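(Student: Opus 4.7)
The plan is to deduce this theorem as a special case of Theorem~\ref{reducing}, together with Lemma~\ref{abelian} to force commutativity of the resulting convolution algebra. I would begin by verifying the two standing hypotheses of Theorem~\ref{reducing}: Assumption~\ref{sheep} holds for the unit disc $D$ because $D$ is a smooth bounded strongly pseudoconvex domain, and Assumption~\ref{dense} is automatic when $n=1$ (as the paper remarks immediately after its statement), since $V(J_f)$ is the finite zero set of $f'$ and so $Z=f^{-1}(f(V(J_f)))$ is finite. Theorem~\ref{reducing} then identifies the commutant of $\{T_f,T_f^{*}\}$ with the convolution algebra $\mathcal{A}(W_f)$ as $*$-algebras.

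Next I would describe $W_f$ explicitly. A finite Blaschke product $f$ is a proper holomorphic self-map of $D$ of some degree $m$, so by the remark at the end of Section~3 one has $W_f = D'\times_f D'$. Consequently the $\mathbb{C}$-dimension of $\mathcal{A}(W_f)$, the space of locally constant functions on $W_f$, equals the number of connected components of $D'\times_f D'$; since $D'\times_f D'$ is a smooth complex manifold, this equals the number $q$ of its irreducible components.

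The core step is to show that $\mathcal{A}(W_f)$ is commutative, and here Lemma~\ref{abelian} does the work. For a finite Blaschke product $f'$ is nowhere zero on $\partial D$, so all critical points lie in the open disc; hence $Z$ and $f(V(J_f))$ are finite subsets of $D$. This makes the restriction $f\colon \overline D\setminus Z\to \overline D\setminus f(V(J_f))$ a finite covering map between connected real manifolds whose manifold-boundary is $\partial D=S^1$, which is connected with abelian fundamental group $\mathbb{Z}$. Lemma~\ref{abelian} therefore yields that $\mathcal{A}(\overline D\setminus Z,f)=\mathcal{A}(D',f)=\mathcal{A}(W_f)$ is commutative.

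To finish, the commutant of $\{T_f,T_f^{*}\}$ is a $*$-closed subalgebra of $B(A^2(D))$, hence a $C^{*}$-algebra; by the preceding paragraphs it is finite-dimensional of dimension $q$ and commutative. Any such algebra is $*$-isomorphic to $\mathbb{C}^{q}$, which is the desired conclusion. The one nontrivial step is checking the hypotheses of Lemma~\ref{abelian}; this is where the one-variable specificity enters, via the fact that for a finite Blaschke product the critical set is interior and the map remains a finite covering all the way up to and including the boundary circle $S^1$.
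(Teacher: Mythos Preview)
Your proposal is correct and follows essentially the same route as the paper: apply Theorem~\ref{reducing} (after checking Assumptions~\ref{sheep} and~\ref{dense}, and noting $W_f=D'\times_f D'$ by properness of a finite Blaschke product) to identify the commutant with $\mathcal{A}(D',f)$, then invoke Lemma~\ref{abelian} via the boundary circle to obtain commutativity, and conclude that a $q$-dimensional commutative $C^{*}$-algebra is $\mathbb{C}^q$. The only cosmetic difference is that you spell out the verification of the standing hypotheses and the identification $W_f=D'\times_f D'$ more explicitly than the paper does.
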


 \begin{proof}
 It follows from Definition \ref{convolution} that $\dim_{\mathbb{C}} \mathcal{A}(D', f)=q.$
  The algebra of commutants
 of $\lbrace T_{f}, T_{f}^{*}\rbrace$  is isomorphic to $\mathcal{A}(D', f)$ by
 Theorem \ref{reducing}.
But  $\mathcal{A}(D', f)$
 is isomorphic to a subalgebra of $\mathcal{A}(\partial(D), f)$ by Lemma \ref{abelian}, which is
 commutative since $\pi_1(\partial D)=\mathbb{Z}$ is Abelian. Thus, the algebra of commutants of $\lbrace T_f, T_f^{*}\rbrace$
   is a $q$-dimensional commutative Von Neumann algebra, hence it is isomorphic to $\mathbb{C}\underbrace{\oplus\cdots\oplus}_q \mathbb{C}.$

 \end{proof}

\acknowledgement{ I am very grateful to  Z.~Cuckovic for introducing me to this area, in particular
for bringing the results of \cite{DSZ} to my attention. Thanks are also due to T.~Le and especially to S.~Sahutoglu
for countless helpful discussions.}

\end{document}